\documentclass[11pt]{amsart}
\usepackage{amsmath,amssymb,amsthm,amsfonts,bm}
\usepackage{geometry}
\usepackage[usenames,dvipsnames]{xcolor}
\usepackage{hyperref}
\hypersetup{colorlinks=true, citecolor=red, linkcolor=blue, filecolor=magenta, urlcolor=red}
\usepackage[capitalize]{cleveref}
\usepackage{comment}

\theoremstyle{plain}
\newtheorem{thm}{Theorem}[section]
\newtheorem{prop}[thm]{Proposition}
\newtheorem{lem}[thm]{Lemma}

\newtheorem{claim}[thm]{Claim}

\theoremstyle{definition}
\newtheorem{defn}[thm]{Definition}
\newtheorem{rem}[thm]{Remark}

\numberwithin{equation}{section}

\newcommand{\C}{\mathbb{C}}

\newcommand{\Z}{\mathbb{Z}}
\newcommand{\Center}{\operatorname{center}}

\newcommand{\cF}{\mathcal{F}}
\newcommand{\Ff}{\mathcal{F}}
\newcommand{\cG}{\mathcal{G}}
\newcommand{\Gg}{\mathcal{G}}

\newcommand{\Ii}{\mathcal{I}}

\newcommand{\cH}{\mathcal{H}}
\newcommand{\cJ}{\mathcal{J}}
\newcommand{\cO}{\mathcal{O}}
\newcommand{\Lie}{\operatorname{Lie}}
\newcommand{\mld}{\operatorname{mld}}
\newcommand{\alg}{\operatorname{alg}}
\newcommand{\Gal}{\operatorname{Gal}}
\newcommand{\Stab}{\operatorname{Stab}}

\newcommand{\Aut}{\operatorname{Aut}}

\newcommand{\rk}{\operatorname{rank}}

\newcommand{\Wprod}{W_{\mathrm{prod}}}
\newcommand{\Aprod}{A_{\mathrm{prod}}}
\newcommand{\Uprod}{U_{\mathrm{prod}}}

\title{Shokurov's global index conjecture for threefold foliations}
\author{Jihao Liu, Sheng Qin}
\address{Department of Mathematics, Peking University, No. 5 Yiheyuan Road, Haidian District, Beijing 100871, China}
\address{Beijing International Center for Mathematical Research, Peking University, No. 5 Yiheyuan Road, Haidian District, Beijing 100871, China}
\email{liujihao@math.pku.edu.cn}
\email{qinsheng@stu.pku.edu.cn}
\subjclass[2020]{14E30, 14J30, 37F75}
\keywords{Foliations, global index, Calabi--Yau foliations, klt threefolds}
\date{\today}

\begin{document}

\begin{abstract}
We prove Shokurov's global index conjecture for foliations in dimension at most three. This answers a question of the first author, Meng, and Xie in dimension three. The main result of this paper is partially obtained by generative AI, particularly the Rethlas system.
\end{abstract}

\maketitle
\tableofcontents

\section{Introduction}

We work over the field of complex numbers $\mathbb C$.

The goal of this paper is to prove the following theorem:

\begin{thm}[Shokurov's global index conjecture for threefold foliations]\label{thm:main}
There exists a positive integer $I$ satisfying the following. Let $X$ be a normal projective variety of dimension $\leq 3$ and $\Ff$ a foliation on $X$ with log canonical singularities such that $K_{\Ff}\equiv 0$. Then $IK_{\Ff}\sim 0$.
\end{thm}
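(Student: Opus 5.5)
We argue by cases according to the rank $r$ of $\Ff$ and the dimension $n=\dim X\le 3$.

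\textbf{Trivial ranks.} If $r=n$, then $\Ff=T_X$ and $K_\Ff=K_X$. The pair $X$ is then lc with $K_X\equiv 0$, and the classical global index theorem for lc Calabi--Yau varieties of dimension $\le 3$ gives a uniform $I$; for threefolds this is due to Jiang/Xu and the boundedness of lc index in dimension three. If $r=0$, then $K_\Ff=0$ and there is nothing to prove.

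\textbf{Algebraically integrable foliations.} In this case $\Ff$ is induced by a rational map $X\dashrightarrow Z$. We pass to an equidimensional toroidal (ACSS) model and use the canonical bundle formula for algebraically integrable foliations:
\[
K_\Ff\sim_{\mathbb Q} K_X+\Delta-f^*(K_Z+B_Z+M_Z),
\]
in the lc/GLC setting. Since $K_\Ff\equiv 0$, we get $K_X+\Delta\equiv f^*(K_Z+B_Z+M_Z)$. Two inputs then reduce the problem to global index statements for generalized lc pairs of dimension $\le 3$ with DCC (indeed finite) coefficients:
\begin{itemize}
\item[$\cdot$] the fibres are lc Calabi--Yau of dimension $\le 2$, so the Cartier index of the moduli part $M_Z$ is uniformly bounded (Prokhorov--Shokurov in low dimension);
\item[$\cdot$] the coefficients of $\Delta$ lie in $\{0,1\}$.
\end{itemize}
Those index statements are known in dimension $\le 3$. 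The bound for $K_\Ff$ is then recovered by pulling back, after checking that $\Ff$-invariant versus non-invariant divisors do not introduce unbounded torsion.

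\textbf{Non-algebraically integrable foliations.} When $n=3$, these are either rank one or corank one foliations with transcendental leaves. Here I would use the structure theory of numerically trivial foliations. After passing to a foliated minimal or lc model that preserves $K_\Ff\equiv 0$, we take the algebraic part $\Ff^{\alg}$ and the induced transcendental foliation on the base of its fibration. By the classification of Pereira, Touzet, Loray and others, a foliation with $K_\Ff\equiv 0$ that is not algebraically integrable becomes, after a finite quasi-étale cover, either a linear foliation on an abelian variety or a foliation built from a product or isotrivial fibration. In the abelian case, or after the cover, $K_\Ff$ is torsion of order dividing the degree of the cover. So the key quantity to bound is the order of the Galois group acting, i.e.\ the torsion order of $K_\Ff$.

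For this we use Jordan-type bounds on finite subgroups of automorphism groups of the relevant low-dimensional pieces: abelian varieties of dimension $\le 3$, elliptic and rational curves, and lc Calabi--Yau surfaces. The required cover has bounded degree because the stabilizer acting on the one-dimensional space $H^0(rK_\Ff)$ has cyclic image of bounded order. Concretely, the image is in $\mathbb C^*$ and comes from a finite group acting on a bounded family, so its order divides a fixed integer, as in the Persson/Blache-type bounds for surfaces.

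\textbf{Expected main obstacle.} The hardest step is the non-algebraically integrable threefold case. Existence of the finite cover trivializing $K_\Ff$ is known, but making its degree uniform requires care at lc (not canonical) foliation singularities, where invariant lc centers may contribute extra torsion. I would first try to reduce to canonical foliations via a foliated dlt modification. The torsion of $K_\Ff$ would then be controlled by the residue of a monodromy/holonomy representation, whose image is a finite subgroup of $GL_2$ preserving a lattice and hence has bounded order.
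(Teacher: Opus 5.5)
\textbf{Verdict: there are genuine gaps.} Your treatment of the trivial ranks and of the algebraically integrable case is at about the same citation level as the paper, which passes to a $\mathbb Q$-factorial ACSS model and invokes Proposition 8.1 of LLM23. One caveat there: a bounded Cartier index for a numerically trivial moduli part $M_Z$ does not bound its order in $\operatorname{Pic}^0(Z)$. What you need is bounded \emph{torsion} of $M_Z$, not a global index statement for a generalized pair.

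The real gaps are in the non-algebraically integrable case. First, a foliated dlt modification does not make $\Ff$ canonical. A $\mathbb Q$-factorial dlt triple can still have a non-canonical centre, namely a curve $W$ transverse to $\Ff$. The paper removes this case by two separate arguments:
\begin{enumerate}
\item If $\Ff$ is purely transcendental, such an $\Ff$ cannot exist. On a resolution, $K_{\Ff'}$ is numerically an exceptional divisor with a negative coefficient, hence not pseudo-effective, and Campana--P\u{a}un then produces algebraic leaves.
\item If $\rk\Ff=2$ and $\rk_{\alg}\Ff=1$, one extracts the divisor $F$ computing $\mld_W(X)$. Using $a(F,\Ff)=a(F,X)<0$, one writes $K_{\Ff'}+bF\equiv0$ with $b$ in a DCC set (by ACC for minimal log discrepancies of surfaces), and then falls back on the $B\neq0$ case of LLM23.
\end{enumerate}
You also do not treat surfaces; there the paper relies on Pereira's classification through Lemma 7.2 of LLM23.

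Second, and more seriously, your mechanism for uniformity fails. Jordan-type bounds cannot bound the image of a finite group in $\C^*$. The relevant automorphism groups contain cyclic subgroups of every order: translations by torsion points of an elliptic curve or abelian variety, or $\mu_n\subset\mathbb G_m$ acting on a toric compactification. Likewise the degree of the structure cover is unbounded, so ``torsion of order dividing the degree'' gives no uniform bound. Your classification also omits the main output of Druel--Ou: equivariant compactifications of arbitrary connected commutative algebraic groups $A$, with the foliation induced by a subspace $W\subset\Lie A$.

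The missing idea is that the deck character factors through a lattice action. The paper's argument runs as follows.
\begin{enumerate}
\item On a Galois quasi-\'etale cover $R$ dominating both the index one cover and the structure cover, one has $\Gg_R\cong W\otimes\cO_R$ and $H^0(K_{\Gg_R})\cong\det W^\vee$.
\item Because $W$ algebraically generates $A$ (a consequence of non-algebraic integrability), every deck transformation normalizes the $A$-action and acts on the open orbit as $x\mapsto L(x)+a$. Translations act trivially on $\det W^\vee$, so only the linear part $L$ matters.
\item The absence of algebraic leaves gives $W\cap\Lie A^{\mathrm{unip}}=0$, so $W$ embeds into the Lie algebra of the semi-abelian quotient.
\item The period lattice of that quotient has rank $\le 6$ and spans a space of dimension $\le 3$; it is not a lattice in a $2$-dimensional space. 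It is preserved by $L$, and the cyclotomic count bounds the order of the character by $30$.
\end{enumerate}
In rank one the paper runs the analogous argument with $A$ the Zariski closure of the flow of the generating vector field in $\Aut^0(Y)$, taken modulo its unipotent part and the generic stabilizer. Your closing remark about a finite group preserving a lattice points in this direction, but without these steps the uniform bound is not established.
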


Theorem \ref{thm:main} has the following log version.

\begin{thm}[Shokurov's global index conjecture for threefold foliations, log version]\label{thm:main-pair}
Let $\Ii\subset [0,1]\cap\mathbb Q$ be a DCC set. Then there exists a positive integer $I$ depending only on $\Ii$ satisfying the following. Let $(X,\Ff,B)$ be an lc projective foliated triple of dimension $\leq 3$ such that 
$$K_{\Ff}+B\equiv 0\quad \text{and}\quad B\in\Ii.$$
Then $I(K_{\Ff}+B)\sim 0$.
\end{thm}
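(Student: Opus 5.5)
We plan to reduce Theorem~\ref{thm:main-pair} to known boundedness results for log Calabi--Yau pairs on the underlying variety (the global index conjecture for lc pairs of dimension $\leq 3$ with DCC coefficients). We split into cases by the rank $r$ of $\Ff$. If $r=\dim X$, then $K_{\Ff}=K_X$ and the lc foliated triple is an lc pair with boundary $B$; the statement is the usual global index theorem for lc pairs of dimension $\le 3$, which we invoke directly. If $r=0$, then $K_{\Ff}=0$ and $B=0$ (all components of $B$ must be non-invariant, and every divisor is invariant), so there is nothing to prove. The substantive cases are algebraically integrable or non-algebraically integrable foliations of rank $1$ or $2$ on surfaces and threefolds.

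First we would pass to a suitable birational model. Using the MMP for lc foliated triples in dimension $\le 3$ (and property $(*)$/ACSS modifications for algebraically integrable foliations), we take a modification $(X',\Ff',B')$ that is crepant, $K_{\Ff'}+B'\equiv 0$, with $B'$ coefficients still in a DCC set determined by $\Ii$ (adding $1$ for exceptional non-invariant divisors). Index is preserved under crepant pullback, and $I(K_{\Ff'}+B')\sim 0$ implies the same on $X$ by pushforward. For the \emph{algebraically integrable} case, with $f\colon X'\to Z$ the induced fibration, there is a canonical bundle formula: $K_{\Ff'}+B'\sim_{\mathbb Q} f^*(\text{moduli part})$ together with $K_{X'}+B'+G\sim_{\mathbb Q} K_{\Ff'}+B'$ plus a pullback, where $G$ is the vertical part. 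This gives an lc generalized pair $(X',B'+G,\mathbf M)$ with $K\equiv 0$ and DCC coefficients. Hence we would apply the global index theorem for generalized lc pairs of dimension $\le 3$ with DCC data, and compare the linear equivalences carefully to transfer the index back to $K_{\Ff'}+B'$. Compatibility requires controlling the Cartier index of $\mathbf M$ on the base, which has dimension $\le 2$ and is bounded by known results.

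For the \emph{non-algebraically integrable} case, we use the structure theory of numerically trivial foliations. In rank $1$ (Brunella, McQuillan, and Cascini--Spicer in dimension $3$), $K_{\Ff}\equiv 0$ with lc singularities forces, after a finite quasi-étale cover of bounded degree, a foliation induced by a vector field (a $\mathbb{G}_a$ or $\mathbb{G}_m$ action, or a linear flow on an abelian variety). In rank $2$ on threefolds, Loray--Pereira--Touzet and Druel's decomposition gives that, after a bounded-degree cover, $\Ff$ is either a linear foliation on an abelian threefold, or is pulled back/product-type from a surface or curve, or is a Riccati-type foliation. In each type $K_{\Ff}$ becomes trivial on the cover, and we then need to bound the order of $K_{\Ff}$ via the Galois action. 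We would bound this by the order of the image of the Galois group in $\Aut$ of a bounded lattice or torus, together with Jordan-type bounds on finite groups acting.

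The main obstacle is this last step: obtaining a \emph{uniform} bound on the degree of the index-one cover in the non-algebraically integrable rank-$2$ case on threefolds, especially when $B\neq 0$. There the boundary forces $K_{\Ff}$ to be anti-effective-ish, and we expect to reduce to $B=0$ by showing that non-invariant lc centers force algebraic integrability or a fibration structure. I would first attempt to show that the cover can be taken to be the index-one cover of $K_{\Ff}$ itself, and to bound its degree through the representation of the cyclic group on $H^0$ of a tangent vector field or holomorphic form, which should have bounded eigenvalue orders in dimension $3$.
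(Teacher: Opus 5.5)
Your proposal has a genuine gap where you yourself place the ``main obstacle'': the non-algebraically integrable case, which is the heart of the theorem, is not proved, and the tools you name cannot give a uniform bound. The foliation becomes induced by global vector fields only on a cover dominating the index-one cover, whose degree is at least $r$, the quantity to be bounded. So ``after a cover of bounded degree'' is circular. Jordan-type bounds control the index of an abelian subgroup and say nothing about the order of a cyclic Galois group.

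The missing idea is to separate the linear part of the Galois action from the translation part. Let $R$ be the Galois closure of the index-one cover together with the structure cover. Here the structure is an equivariant compactification of a connected commutative algebraic group $A$, possibly with torus and unipotent parts, not just an abelian threefold. In rank one, $A$ is instead the Zariski closure in $\operatorname{Aut}^0$ of the flow of the vector field.
\begin{itemize}
\item One shows $\mathcal{G}_R\cong W\otimes\mathcal{O}_R$ for a subspace $W\subset\operatorname{Lie}A$: the divisor of the wedge of the induced vector fields is effective and numerically trivial, hence zero.
\item Since $W$ algebraically generates $A$, each $\gamma\in\operatorname{Gal}(R/X)$ normalizes $A$ and acts on the open orbit by $x\mapsto L_\gamma(x)+a_\gamma$.
\item The translation $a_\gamma$ can have any order but acts trivially on invariant vector fields. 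So $\gamma$ acts on $H^0(K_{\mathcal{G}_R})\cong\det(W^\vee)$ through $dL_\gamma|_W$ alone. Choosing $\gamma$ over a generator of the deck group of the index-one cover, this scalar has exact order $r$.
\item Non-algebraic integrability gives $W\cap\operatorname{Lie}A^{\mathrm{unip}}=0$, since a $\mathbb{G}_a$ with Lie algebra in $W$ has algebraic orbits tangent to the foliation. In rank one it also ensures $W$ survives the quotient by the generic stabilizer.
\item Hence $dL_\gamma$ descends to a finite-order automorphism of a semi-abelian variety of dimension $\le3$ preserving its period lattice of rank $\le6$. The constraint $\sum\varphi(d)\le6$ on cyclotomic factors gives order $\le30$.
\end{itemize}
Excluding unipotent directions is indispensable: $\mathbb{G}_a$ has the automorphisms $x\mapsto\zeta x$ for every root of unity $\zeta$, so without it there is no bound.

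The reduction is also incomplete, because the structure theorems need $\mathcal F$ canonical and $B=0$.
\begin{itemize}
\item \emph{$B\neq0$, purely transcendental.} Here $K_{\mathcal F}\equiv-B$ is not pseudo-effective, so by Campana--P\u{a}un $\mathcal F$ would be uniruled, a contradiction.
\item \emph{$B\neq0$, rank two, algebraic rank one.} After modification, $\mathcal F$ is pulled back from a foliation on a surface, a case covered by existing results.
\item \emph{$B=0$, $\mathcal F$ lc but not canonical.} Your modification step only adds coefficient-$1$ non-invariant divisors, so it leaves places with $-1<a(E,\mathcal F)<0$ untouched. The paper observes that such places lie over curves transverse to $\mathcal F$, where $a(F,\mathcal F)=a(F,X)$. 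It extracts the divisor $F$ computing the minimal log discrepancy of $X$ along such a curve. Alexeev's ACC for minimal log discrepancies of surfaces puts $b=-a(F,X)$ in a DCC set depending only on $\mathcal{I}$, which lands you in the $B\neq0$ case. The purely transcendental non-canonical case is again excluded by Campana--P\u{a}un on a resolution.
\end{itemize}
Finally, in the algebraically integrable case $K_{X'}+B'+G\sim_{\mathbb Q}f^*(K_Z+B_Z+\mathbf M_Z)$ is not numerically trivial in general. So there is no generalized pair ``with $K\equiv0$'' to feed into a global index theorem. What is numerically trivial is the moduli part, and what you need is effective torsion of it on the low-dimensional base.
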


Theorem \ref{thm:main-pair} answers Shokurov's global index conjecture for foliations \cite[Conjecture 5.3]{LMX24} in dimension $3$. Previously, Theorem \ref{thm:main-pair} was known in the following cases:
\begin{enumerate}
\item When $\Ff=TX$. This corresponds to the classical Shokurov's global index conjecture in dimension $\leq 3$ and was proven in \cite[Theorem 1.8.7]{Xu20} based on \cite[Corollary 1.7]{Jia21}.
\item When $B\not=0$ \cite[Theorem 1.1]{LLM23}.
\item When $\dim X\leq 2$ \cite[Theorem 1.4]{LLM23} based on \cite[Theorem 1]{Per05}.
\end{enumerate}
It is also worth mentioning that the non-effective version of Theorems \ref{thm:main} and \ref{thm:main-pair} was proven in dimension $\leq 3$ or when $\rk\Ff=1$ in \cite[Theorem 12.1]{CS21} and \cite[Theorem 1.2]{CS26}, and for algebraically integrable foliations in \cite[Theorem 1.4]{DLM23}.

\begin{rem}
By following the proofs of the main results in this paper, we actually have the stronger criterion: Under the conditions of Theorem \ref{thm:main}, if $\Ff$ is canonical and not algebraically integrable, then we may take $I\leq 30$. It is, therefore, interesting to classify non-algebraically integrable canonical Calabi--Yau foliations in dimension $\leq 3$ based on their indices.

Another interesting question to ask is whether the boundedness of complements conjecture for foliations \cite[Conjecture 5.1]{LMX24} holds in dimension $3$, which is known to imply Theorem \ref{thm:main-pair} immediately \cite[Proposition 5.4]{LMX24}. Yet this is a much harder question and is only partially known in dimension $2$ for Fano foliations \cite[Proposition 4.1]{CJV24}.
\end{rem}

\subsection*{Use of generative AI and some ideas of the proof}
A substantial part of the proof was developed with the assistance of the Rethlas system; see \cite{Ju+26} for an introduction to the system. The final argument, however, required significant human input, both in supplying missing references and in reorganizing the output into a rigorous proof.

The algebraically integrable case was already well understood by the first author: in that setting, $K_{\Ff}+B$ is crepant to the moduli part of a Calabi--Yau fibration (cf. \cite[Proposition 3.5]{ACSS21}), and the relevant torsion and effective base-point-freeness statements are available in low dimensions. The main difficulty lies in the non-algebraically integrable case, especially in the purely transcendental case.

When the problem was submitted to Rethlas, the system naturally separated the argument according to the rank, the algebraic rank, and the dimension. It quickly found the reduction to dimension three via \cite{Per05}, but it did not initially locate \cite{Jia21,Xu20}. It also identified the Lie-theoretic approach to the rank one non-algebraically integrable case and then to the rank two case with algebraic rank one. After several prompts, the same ideas were adapted to the rank two purely transcendental case. At that stage the canonical case without boundary was essentially settled.

The system was less effective in the algebraically integrable case. It did not find the relevant references \cite{ACSS21,CHLX23,LMX25} until they were supplied by the first author. This reflects a limitation of the Lie-theoretic strategy: it detects algebraic subfoliations from invariant subgroups of automorphism groups, and is therefore most useful when non-algebraic integrability imposes strong restrictions on the group. From the viewpoint of the minimal model program, by contrast, the algebraically integrable case is the more standard one.

Several revisions were needed after the initial output. First, Rethlas treated only the canonical case without boundary; the reduction from the log canonical case is carried out in Proposition~\ref{prop:reduction}, where the ACC for minimal log discrepancies on surfaces \cite{Ale93} is used to close a small gap. Second, the original treatment of the rank two purely transcendental case repeatedly appealed to the rank two algebraic rank one case without recording the necessary finite-cover and equivariance arguments. These details are supplied below in the Lie-theoretic preliminaries and in Sections~\ref{sec:case-d} and~\ref{sec:case-e}.

Due to the limitation of generative AI, it is possible that we have missed some related references in the literature, and we welcome any comments from experts.

\subsection*{Acknowledgements}
The authors were partially supported by the National Key R\&D Program of China \#\allowbreak 2024YFA1014400. The authors would like to thank the Rethlas team, namely Haocheng Ju, Jiedong Jiang, Shurui Liu, Guoxiong Gao, Yuefeng Wang, Zeming Sun, Bin Wu, Liang Xiao, and Bin Dong, for their contributions to the development of Rethlas and its customized version used for the problem studied in this paper. The authors would like to thank Ruochuan Liu and Gang Tian for constant support and encouragement.

\section{Preliminaries}\label{sec:preliminaries}

We adopt the standard notation and terminology for the minimal model program from \cite{Sho92,KM98,BCHM10} and use them freely. We shall only consider foliations in dimension $\leq 3$ in this paper and we shall follow the definitions and notations as in \cite{CS21,CS25} for foliations.  

Throughout the paper, for a normal variety $X$ we write $TX$ for the tangent sheaf $(\Omega_X^1)^\vee$; when $X$ is smooth, this is the usual tangent bundle. We do not use $T_X$ as it may have notation conflicts with the torus in Lie theory, which we shall also use in this paper.

\begin{defn}
Let $X$ be a normal quasi-projective variety. A \emph{foliation} $\Ff$ on $X$ is a saturated subsheaf $\Ff$ of $TX$ that is closed under the Lie bracket. The \emph{rank} of $\Ff$ is the rank of $\Ff$ as a sheaf and is denoted by $\rk\Ff$. The \emph{algebraic rank} of $\Ff$ is the maximal dimension of a variety $Z$ such that there exists a dominant rational map $f: X\dashrightarrow Z$ and a foliation $\Ff_Z$ on $Z$ such that $\Ff=f^{-1}\Ff_Z$, and is denoted by $\rk_{\alg}\Ff$. 

We say that $\Ff$ is \emph{algebraically integrable} if $\rk_{\alg}\Ff=\rk\Ff$. We say that $\Ff$ is \emph{purely transcendental} if $\rk_{\alg}\Ff=0$.
\end{defn}

\begin{defn}
A \emph{foliated triple} $(X,\Ff,B)$ consists of a normal quasi-projective variety $X$, a foliation $\Ff$ on $X$, and an $\mathbb R$-divisor $B$ on $X$ such that $K_{\Ff}+B$ is $\mathbb R$-Cartier.
\end{defn}

 We will frequently use the following result on abundance for log Calabi--Yau foliated triples in dimension $\leq 3$ without citing:

\begin{thm}[{\cite[Theorem 1.7]{CS21},\cite[Theorem 1.2]{CS26},\cite[Theorem 1.8]{LLM23}}]
Let $(X,\Ff,B)$ be a projective foliated triple such that $\dim X\leq 3$ and $K_{\Ff}+B\equiv 0$. Then $K_{\Ff}+B\sim_{\mathbb R}0$.
\end{thm}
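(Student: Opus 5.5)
We would not reprove this statement from scratch; it is assembled from the cited works, and the plan is to organize the argument so that each case is covered by one of them. \emph{First}, reduce to $\mathbb Q$-coefficients. The locus of divisors $B'$ supported on $\operatorname{Supp}B$ with $K_{\Ff}+B'\equiv 0$ is a rational affine subspace of the coefficient space. By the polytope argument of Shokurov for foliated triples, the lc condition is preserved in a neighbourhood inside it. Hence $B=\sum r_iB_i$ with $r_i>0$, $\sum r_i=1$, each $B_i$ a $\mathbb Q$-divisor, $(X,\Ff,B_i)$ lc and $K_{\Ff}+B_i\equiv 0$. It therefore suffices to show $K_{\Ff}+B_i\sim_{\mathbb Q}0$, i.e.\ that a numerically trivial $\mathbb Q$-Cartier divisor is torsion in the class group. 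After passing to an $\Ff$-dlt (or foliated log resolution) modification, which is crepant and so preserves both numerical and $\mathbb Q$-linear triviality, we may assume the singularities are mild.

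\emph{Second}, split by $\rk\Ff$ and $\rk_{\alg}\Ff$. If $\rk\Ff=0$, then $K_{\Ff}=0$ and $B\equiv 0$ with $B\ge 0$ forces $B=0$. If $\Ff=TX$, this is abundance for numerically trivial lc pairs (Nakayama, Gongyo), which covers $\dim X\le 3$. If $\Ff$ is algebraically integrable, we use that $K_{\Ff}+B$ is crepant to $K_Z+B_Z+M_Z$ for an lc-trivial fibration $X\dashrightarrow Z$. Numerical triviality forces $B_Z$ and $M_Z$ to be numerically trivial on $Z$, with $\dim Z\le 2$. The torsion-ness of numerically trivial moduli b-divisors (Ambro, Floris) then gives $\mathbb Q$-linear triviality. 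Surfaces are handled in this way together with the rank one classification, as in \cite{LLM23}.

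\emph{Third}, treat the non-algebraically integrable cases in dimension three. For rank one, we follow \cite{CS26}: run the foliated MMP for $K_{\Ff}+B$, which here consists of crepant steps. On the output, use the structure of numerically trivial rank one foliations (Brunella, McQuillan): either $\Ff$ is induced by a vector field, so $K_{\Ff}\sim 0$ up to the boundary, or it is tangent to a fibration and the problem reduces to lower dimension. For rank two, we follow \cite{CS21}: run the foliated MMP, reduce to a canonical model with $K_{\Ff}+B\equiv 0$, and then use the Touzet / Loray--Pereira--Touzet classification of foliations with numerically trivial canonical class. Up to a finite quasi-étale cover, these are either linear foliations on abelian threefolds, or they come from fibrations/products where the statement reduces to curves and surfaces.

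The main obstacle is the rank two purely transcendental case. Here numerical triviality of $K_{\Ff}$ only says that $\det\Ff^\vee$ lies in $\operatorname{Pic}^0$, and one must show it is torsion. The approach would be to show that this line bundle is a flat unitary bundle whose monodromy is finite. The argument would try to go through the structure theorem: after a finite cover, $\Ff$ is the linear foliation on a torus, where $K_{\Ff}\sim 0$ is immediate. Descending along the cover then gives torsion, and the remaining work is checking that the cover can be taken quasi-étale, so that crepancy is preserved.
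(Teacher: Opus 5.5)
The paper gives no proof of this statement. It imports it from \cite[Theorem 1.7]{CS21}, \cite[Theorem 1.2]{CS26} and \cite[Theorem 1.8]{LLM23}, so deferring to the literature along your case split (rank zero, $\Ff=TX$, algebraically integrable, rank one non-algebraically integrable, corank one in dimension three) is the same route. Two preliminary corrections:
\begin{itemize}
\item As printed, the statement needs $(X,\Ff,B)$ to be lc with $B\ge 0$. You use this tacitly but should state it: for $\Ff=TX$ on an elliptic curve and $B=p-q$ with $p-q$ non-torsion, the statement fails.
\item Lc is a closed condition, not an open one. Your polytope step should say that the lc locus inside the rational affine space of $B'$ with $K_\Ff+B'$ $\mathbb R$-Cartier and numerically trivial is a rational polytope containing $B$, not that lc persists in a neighbourhood.
\end{itemize}

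\textbf{The algebraically integrable step is wrong as written.} On an ACSS model $f\colon Y\to Z$ one has $K_{\Ff_Y}+B_Y\sim_{\mathbb R}f^*M_Z$, the moduli part alone (cf.\ \cite[Proposition 3.5]{ACSS21}). The class $K_Z+B_Z+M_Z$ is what $K_Y+B_Y+G$ is crepant to, not the foliated divisor. Moreover, $K_Z+B_Z+M_Z\equiv 0$ does not force $B_Z\equiv 0\equiv M_Z$: take $Z=\mathbb P^1$, $M_Z=0$, and $B_Z$ three points with coefficient $2/3$. With the correct formula, $M_Z\equiv 0$ is immediate. B-nefness and abundance of the moduli part (Ambro; Fujino--Gongyo for lc-trivial fibrations) then give $M_Z\sim_{\mathbb Q}0$.

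\textbf{The transcendental cases are also misdescribed.}
\begin{itemize}
\item An MMP for $K_\Ff+B$ is vacuous, since that divisor is already nef. What works is a $K_\Ff$-MMP, whose steps are $(K_\Ff+B)$-trivial. More directly, $B\neq 0$ or non-canonical singularities make $K$ of a resolution non-pseudo-effective, so \cite{CP19} yields algebraic leaves (compare Step 3 of Proposition~\ref{prop:reduction}).
\item For corank one, Loray--Pereira--Touzet is a statement on projective manifolds. On klt threefolds you need Theorem~\ref{thm:DO22-structure}.
\item The transcendental alternative of Theorem~\ref{thm:DO22-structure} is a linear foliation on an equivariant compactification of an arbitrary connected commutative algebraic group, not just an abelian threefold. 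An example is the foliation on $(\mathbb P^1)^3$ induced by a general plane in $\Lie\mathbb G_m^3$.
\end{itemize}
Your conclusion still survives. That theorem gives $K_{W_1}\sim 0$ and $\cH_2\cong\cO^{\dim W_2-1}$, so the foliated canonical class is trivial on a quasi-\'etale cover. Pushing forward (taking norms) then shows $K_\Ff$ is torsion on $X$, so no unitary-monodromy argument is needed.
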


\begin{lem}\label{lem:R-to-Q-linear}
Let $D$ be a $\mathbb Q$-Cartier $\mathbb Q$-divisor on a normal variety $X$. If $D\sim_{\mathbb R}0$, then $D\sim_{\mathbb Q}0$.
\end{lem}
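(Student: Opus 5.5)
We reduce the statement to a linear-algebra fact over $\mathbb Q$: the coefficient vector of a rational divisor that is $\mathbb R$-linearly trivial lies in a $\mathbb Q$-defined affine space, so it can be approximated, and in fact matched, by rational data. Since $D\sim_{\mathbb R}0$, we write
\[
D=\sum_{i=1}^m r_i\,(f_i)
\]
with $r_i\in\mathbb R$ and $f_i$ nonzero rational functions on $X$. Let $P_1,\dots,P_n$ be the prime divisors in the union of the supports of $D$ and of the $(f_i)$. Then
\[
D=\sum_j d_jP_j,\qquad d_j\in\mathbb Q,\qquad (f_i)=\sum_j a_{ij}P_j,\qquad a_{ij}\in\mathbb Z.
\]
Comparing coefficients of each $P_j$ turns the equality $D=\sum r_i(f_i)$ into the linear system
\[
\sum_i a_{ij}x_i=d_j\qquad (j=1,\dots,n)
\]
in the unknowns $x_1,\dots,x_m$. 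This system has rational coefficients and a rational right-hand side.

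The system has a real solution, namely $(r_1,\dots,r_m)$. A linear system with coefficients in $\mathbb Q$ that is solvable over $\mathbb R$ is already solvable over $\mathbb Q$. One way to see this is by Gaussian elimination over $\mathbb Q$. Another is to note that consistency is the rank condition $\rk A=\rk[A\mid d]$, which does not change under field extension. So there is a rational solution $(q_1,\dots,q_m)\in\mathbb Q^m$, and it satisfies
\[
D=\sum_i q_i(f_i).
\]

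Now choose a positive integer $N$ with $Nq_i\in\mathbb Z$ for all $i$. Then
\[
ND=\Bigl(\prod_i f_i^{Nq_i}\Bigr),
\]
so $ND\sim 0$ and hence $D\sim_{\mathbb Q}0$. The $\mathbb Q$-Cartier hypothesis is not actually needed, since the argument works at the level of Weil divisors. No step is a real obstacle; the only point needing care is that the solvability of the system descends from $\mathbb R$ to $\mathbb Q$.
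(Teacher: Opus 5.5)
Your proof is correct and follows the same route as the paper: restrict to the finitely many prime divisors involved, obtain a linear system with rational coefficients that has a real solution, pass to a rational solution, and conclude that $D$ is a $\mathbb Q$-linear combination of principal divisors. You also spell out the clearing of denominators, which the paper leaves implicit, and you correctly note that the $\mathbb Q$-Cartier hypothesis is never used.
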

\begin{proof}
Write $D=\sum_{i=1}^m a_i\operatorname{div}(f_i)$ with $a_i\in\mathbb R$ and $f_i\in\mathbb C(X)^*$. Restricting to the finitely many prime divisors appearing in $D$ and in the divisors of the $f_i$ gives a finite linear system with rational coefficients and rational right-hand side. Since it has the real solution $(a_i)$, it has a rational solution. Hence $D$ is a $\mathbb Q$-linear combination of principal divisors.
\end{proof}

\begin{lem}\label{lem:cyclic}
Let $X$ be a normal projective variety and let $\cF$ be a foliation with $K_\cF\sim_{\mathbb Q}0$. Let $r$ be minimal with $rK_\cF\sim0$. Let $\pi:Y\to X$ be the index one cover of $\cF$, and set $\cG:=\pi^{-1}\cF$. Then $\cO_Y(K_\cG)\cong\cO_Y$, and its tautological generator is an eigenvector for $\Gal(Y/X)\cong\mu_r$ with primitive character of order $r$.
\end{lem}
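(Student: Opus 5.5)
The plan is to build the index one cover by hand and read off both assertions. Fix a nonzero section $s\in H^0(X,\cO_X(rK_\cF))$ with $\operatorname{div}(s)=rK_\cF$; then $s$ trivializes the reflexive sheaf $\cO_X(rK_\cF)$. Set
$$Y=\operatorname{Spec}_X\bigoplus_{i=0}^{r-1}\cO_X(-iK_\cF),$$
with algebra structure given by $\cO_X(-iK_\cF)\otimes\cO_X(-jK_\cF)\to\cO_X(-(i+j)K_\cF)$, followed by the identification $\cO_X(-rK_\cF)\cong\cO_X$ via $s$ when $i+j\geq r$. The grading defines a $\mu_r$-action with $Y/\mu_r=X$. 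Minimality of $r$ makes $Y$ irreducible: if it split, some $\cO_X(-iK_\cF)$ with $0<i<r$ would be trivial, forcing $iK_\cF\sim 0$. So $Y$ is a normal variety (normality holds over the regular locus, and the $S_2$ property of the reflexive summands gives it everywhere), and $\pi$ is a cyclic Galois cover with group $\mu_r$. I will take this construction as the definition of ``the index one cover of $\cF$''.

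Next I compute $K_\cG$. The map $\pi$ is étale over the open set $U\subset X_{\mathrm{reg}}$ where $K_\cF$ is Cartier, and the complement of $U$ has codimension $\geq 2$. Since $\cG=\pi^{-1}\cF$ agrees with $\pi^*\cF$ wherever $\pi$ is étale, we get $K_\cG=\pi^*K_\cF$ as Weil divisors. On $\pi^{-1}(U)$, the tautological degree-one piece $\cO_X(-K_\cF)\subset\pi_*\cO_Y$ supplies a nowhere-vanishing function $t$ with $t^r=\pi^*s$. Let $\sigma$ be the local generator of $\cO(K_\cF)$ dual to $t$, i.e. $\sigma=t^{-1}\cdot(\text{local frame})$. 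Then $\sigma$ is a global nowhere-vanishing section of $\cO_Y(K_\cG)$ on $\pi^{-1}(U)$. Because both sheaves are reflexive, it extends over a set of codimension $\geq2$, which gives $\cO_Y(K_\cG)\cong\cO_Y$. Concretely, $\sigma$ corresponds to the canonical section of $\pi^*\cO_X(K_\cF)\otimes\cO_X(-K_\cF)\to\cO_Y$ taken in degree one.

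For the Galois action, $\zeta\in\mu_r$ acts on the degree-$i$ summand by $\zeta^i$. Hence $\zeta^*t=\zeta t$, and $\zeta$ acts on the generator of $\pi^*\cO(K_\cF)$ coming from $X$ trivially. Therefore $\zeta^*\sigma=\zeta^{-1}\sigma$ (or $\zeta\sigma$, depending on the convention), which is a character $\chi$ of $\mu_r$ generating the character group, i.e. of order exactly $r$.

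The step needing the most care is exactly this primitivity and the compatibility of the $\mu_r$-action on $\cO_Y(K_\cG)$ induced by the foliation with the grading. The natural action of $\Gal(Y/X)$ on $K_\cG$ comes from its action on $\cG\subset TY$, i.e. from pulling back forms. I will check on the étale locus that the pullback of a local generator $\omega$ of $K_\cF$ is invariant. The eigenvector property of $\sigma=t\cdot\pi^*\omega'$, suitably normalized, is then forced by $t$ alone. If the order $d$ of $\chi$ were a proper divisor of $r$, then $\sigma^{\otimes d}$ would be $\mu_r$-invariant and would descend to a nowhere-vanishing section of $\cO_X(dK_\cF)$. That gives $dK_\cF\sim0$ with $d<r$, contradicting minimality. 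This invariant-descent argument also gives primitivity directly, independent of the convention.
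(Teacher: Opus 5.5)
Your proposal is correct and follows the paper's route: the same explicit cyclic cover $\bigoplus_{i=0}^{r-1}\cO_X(-iK_\cF)$, a generator of $\cO_Y(K_\cG)\cong\pi^{[*]}\cO_X(K_\cF)$ built from the tautological degree-one section using quasi-\'etaleness and reflexivity, and the Galois character read off from the grading. Two of your steps go beyond the paper's write-up. First, your check that pullbacks of local generators of $K_\cF$ are Galois-invariant makes precise the paper's brief remark that ``the Galois group acts through the grading''. Second, your descent argument (a $\mu_r$-invariant $\sigma^{\otimes d}$ would trivialize $\cO_X(dK_\cF)$) confirms primitivity independently of sign conventions, which the paper leaves implicit.
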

\begin{proof}
Let $D:=K_\cF$. The cyclic cover is defined by
\[
    \pi_*\cO_Y=\bigoplus_{i=0}^{r-1}\cO_X(-iD),
\]
with multiplication induced by a choice of trivialization $\cO_X(rD)\cong\cO_X$. The degree one summand gives a tautological section of $\pi^{[*]}\cO_X(D)$ whose $r$-th power is $1$. Since the cover is quasi-\'etale, one has
\[
    \cO_Y(K_\cG)\cong\pi^{[*]}\cO_X(K_\cF).
\]
The Galois group acts through the grading. Hence the tautological generator has character equal to the standard character of $\mu_r$ or its inverse, and therefore has exact order $r$.
\end{proof}

\begin{thm}[{\cite[Proposition~8.14]{Dru21}}]\label{thm:druel-product}
For a normal projective klt variety \(V\) and a foliation \(\cG\) on \(V\) with canonical singularities and \(K_\cG\) \(\mathbb Q\)-Cartier with \(K_\cG\equiv 0\), there exist normal projective klt varieties \(Y_0,Z_0\) and a finite quasi-\'etale cover
\(u\colon Y_0\times Z_0\to V\) such that \(u^{-1}\cG\) is the pullback by the projection \(Y_0\times Z_0\to Y_0\) of a foliation \(\cH_0\) on \(Y_0\), no positive-dimensional algebraic subvariety tangent to \(\cH_0\) passes through a general point of \(Y_0\), \(K_{Z_0}\sim 0\), \(\cH_0\) has canonical singularities, and \(K_{\cH_0}\equiv 0\).
\end{thm}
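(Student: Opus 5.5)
This statement is quoted from \cite[Proposition~8.14]{Dru21}, so the paper takes it as an input. A proof would follow Druel's decomposition strategy, which I sketch here.

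\emph{Step 1 (algebraic part).} Since $\cG$ is canonical with $K_\cG\equiv 0$, the algebraic reduction of $\cG$ gives a foliation $\cG_{\alg}\subseteq\cG$ induced by the rational map $V\dashrightarrow W$ onto the space of leaves of the maximal algebraically integrable subfoliation. By the canonicity of $\cG$ and results of Loray--Pereira--Touzet type, the general leaves of $\cG_{\alg}$ are algebraic with trivial canonical class. More precisely, $K_{\cG_{\alg}}\equiv 0$ and $\cG_{\alg}$ has no horizontal ramification. Then the foliated adjunction/canonical bundle formula, together with the vanishing of the moduli part forced by $K_{\cG}\equiv 0$, shows that the closures of the leaves form an isotrivial family. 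The fibers $F$ are klt with $K_F\equiv 0$, and hence $K_F\sim_{\mathbb Q}0$ by abundance in the numerically trivial case.

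\emph{Step 2 (splitting).} From isotriviality and the klt hypothesis on $V$, I would use a relative Bogomolov--Beauville-type splitting: the family of leaves is locally trivial in codimension one. After the index-one cover of $K_F$ and a finite quasi-\'etale base change, it becomes a product $Y_0\times Z_0\to Y_0$ with $K_{Z_0}\sim 0$. On $Y_0\times Z_0$, the pulled-back foliation $u^{-1}\cG$ contains the relative tangent sheaf of the projection to $Y_0$. Since $u^{-1}\cG$ is saturated and closed under the bracket, it descends, i.e.\ $u^{-1}\cG=\mathrm{pr}_1^{-1}\cH_0$. The canonicity of $\cH_0$ and the identity $K_{u^{-1}\cG}=\mathrm{pr}_1^*K_{\cH_0}+\mathrm{pr}_2^*K_{Z_0}$ give $K_{\cH_0}\equiv0$, because quasi-\'etale pullback preserves canonicity.

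\emph{Step 3 (transcendence of $\cH_0$).} By maximality of the algebraic part, $\cH_0$ has no algebraic subvarieties through a general point. If one did, it would enlarge $\cG_{\alg}$.

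The main obstacle is Step 2. Isotriviality alone only gives a product after a possibly non-quasi-\'etale cover, so one must show that the cover can be chosen quasi-\'etale. Here I would use that $\cG$ is canonical, so there are no multiple fibers in codimension one, and that $V$ is klt, so quasi-\'etale covers are controlled.
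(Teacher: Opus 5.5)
As in the paper, you treat this statement as an imported result. The paper gives no argument beyond the citation to \cite[Proposition~8.14]{Dru21}, so at that level your proposal agrees with it. The sketch you add, however, would not stand as a proof. In Step~1 the facts that carry everything are only asserted by appeal to ``results of Loray--Pereira--Touzet type'': that $K_{\cG_{\alg}}\equiv 0$, that $\cG_{\alg}$ inherits canonical singularities, and that the moduli part of the leaf fibration vanishes. Establishing these is exactly where the Campana--P\u{a}un pseudo-effectivity theorem (applied to the transcendental quotient on the base of $\cG_{\alg}$) and the canonical bundle formula are needed, and you give no argument for either.

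The more serious problem is Step~2. You correctly identify it as the crux, but you propose to handle it by claiming that canonicity of $\cG$ excludes multiple fibers in codimension one. That claim is false. Let $S=(E\times C)/\Gamma$ be a bielliptic surface, with $E,C$ elliptic curves and $\Gamma$ acting on $E$ by translations and on $C$ with $C/\Gamma\cong\mathbb P^1$. Consider the foliation by fibers of $S\to C/\Gamma\cong\mathbb P^1$. Its pullback under the \'etale cover $E\times C\to S$ is the foliation given by the projection to $C$, so it is regular (hence canonical) with numerically trivial canonical class. Nevertheless $S\to\mathbb P^1$ has multiple fibers, and the family is not locally trivial near them.

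What one actually extracts from $K_{\cG_{\alg}}\equiv0$ via the canonical bundle formula is that the discriminant of the leaf fibration is exactly accounted for by the fiber multiplicities. Concretely, over each prime divisor of the base all fiber components have a common multiplicity $m$, and the log canonical threshold of the fiber is $1/m$. This is what makes a base change ramified to order $m$ along those divisors induce a cover of $V$ that is \'etale in codimension one; in the example, this cover is $E\times C\to S$.

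Even after that, you must still promote birational triviality (from the vanishing moduli part) to an honest product over a quasi-\'etale cover. One way is to prove local triviality of the family in codimension one after the base change, and then trivialize the resulting torsor under $\Aut(F)$, whose identity component may be an abelian variety. None of this is supplied. So your sketch does not prove the statement: the missing content is precisely Druel's argument, which the paper simply cites.
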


\begin{thm}[{\cite[Theorem~1.1]{DO22}}]\label{thm:DO22-structure}
For a normal projective klt variety \(W\) and a codimension one foliation \(\cJ\) on \(W\) with canonical singularities and \(K_\cJ\equiv 0\), one of the following holds.
\begin{enumerate}
\item There exist a smooth projective curve \(C\), a normal projective klt variety \(W'\) with \(K_{W'}\sim 0\), and a finite quasi-\'etale cover \(W'\times C\to W\) such that the inverse-image foliation is induced by the projection \(W'\times C\to C\).
\item There exist normal projective klt varieties \(W_1,W_2\), a finite quasi-\'etale cover \(W_1\times W_2\to W\), and a foliation \(\cH_2\) on \(W_2\) isomorphic to \(\cO_{W_2}^{\dim W_2-1}\) such that the inverse-image foliation is the pullback of \(\cH_2\) by the projection \(W_1\times W_2\to W_2\); moreover \(K_{W_1}\sim 0\), \(W_2\) is an equivariant compactification of a connected commutative algebraic group, and \(\cH_2\) is induced by a codimension one Lie subgroup.
\end{enumerate}
\end{thm}

\section{Reduction}

The goal of this section is to prove the following proposition which effectively reduces the question to the case when $X$ is $\mathbb Q$-factorial klt of dimension $3$, $\Ff$ is canonical and not algebraically integrable, and $B=0$. Classification results in \cite{Dru21,DO22} can be applied after that.

\begin{prop}\label{prop:reduction}
Assume that Theorem \ref{thm:main-pair} holds when all the following conditions hold:
\begin{center}
$X$ is $\mathbb Q$-factorial klt, $\dim X=3$, $\Ff$ is canonical, $B=0$, and $2\geq\rk\Ff>\rk_{\alg}\Ff\geq 0$.
\end{center}
Then Theorem \ref{thm:main-pair} holds.
\end{prop}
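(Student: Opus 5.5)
We reduce the general statement of Theorem \ref{thm:main-pair} to the special case in the hypothesis by removing, one at a time, the cases that are already known or can be handled directly. First, if $\dim X\le 2$ the statement is \cite[Theorem 1.4]{LLM23}, and if $B\neq 0$ it is \cite[Theorem 1.1]{LLM23}. So we may assume $\dim X=3$ and $B=0$; in particular $\Ii$ plays no role any more. If $\rk\Ff=3$ then $\Ff=TX$, and $X$ is lc with $K_X\equiv 0$, so the bound follows from \cite[Theorem 1.8.7]{Xu20}. If $\rk\Ff=0$ then $K_\Ff=0$ and there is nothing to prove.

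This leaves $\rk\Ff\in\{1,2\}$. The algebraically integrable case is handled separately. There \cite[Proposition 3.5]{ACSS21} shows that, after passing to an ACSS model $(X',\Ff',B')\to(X,\Ff)$, $K_{\Ff'}+B'$ is crepant to $K_\Ff$ and equals the pullback of $K_Z+B_Z+M_Z$ for an lc-trivial fibration $f:X'\to Z$ with $\dim Z\le 2$. Its coefficients lie in a DCC set (in fact a finite set, since $B=0$). Effective torsion of the generalized pair $(Z,B_Z+M_Z)$ in dimension $\le 2$ then gives a uniform $I$. For this we use the global index results of \cite{CHLX23,LMX25} for generalized pairs in dimension $\le 2$, together with the bounded Cartier index of the moduli part. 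Since $K_\Ff$ and $K_{\Ff'}+B'$ agree up to pullback, $IK_\Ff\sim 0$. Hence we may assume $2\ge\rk\Ff>\rk_{\alg}\Ff$.

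Next we make $X$ $\mathbb Q$-factorial klt and $\Ff$ canonical. We take a foliated log resolution, or an F-dlt modification in the sense of \cite{CS21}, $g:(Y,\Ff_Y,B_Y)\to(X,\Ff)$ with $K_{\Ff_Y}+B_Y=g^*K_\Ff$. Here $Y$ is $\mathbb Q$-factorial klt and $B_Y$ is reduced and consists of non-invariant divisors, with coefficients $\epsilon(E)\in\{0,1\}$. If $B_Y\neq 0$ we are in the boundary case \cite[Theorem 1.1]{LLM23} with $\Ii=\{0,1\}$. The index of $K_{\Ff_Y}+B_Y$ equals that of $K_\Ff$, since $g_*\cO_Y(m(K_{\Ff_Y}+B_Y))=\cO_X(mK_\Ff)$ and the pullback is linearly trivial. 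If $B_Y=0$, then $\Ff_Y$ has only lc singularities, and every divisor with discrepancy $-\epsilon$ is invariant, so its discrepancy is $0$. The remaining issue is to pass from lc to canonical. Running a $K_{\Ff_Y}$-MMP is trivial since $K_{\Ff_Y}\equiv 0$, so we instead extract all exceptional divisors of negative discrepancy (all equal to $0$ by the above, or $-1$ for non-invariant ones). Such divisors produce a boundary again and so are covered by \cite{LLM23}; otherwise $\Ff_Y$ is canonical.

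\textbf{Main obstacle.} The hardest step is this lc-to-canonical passage. Near a curve or point where $\Ff$ is lc but not canonical and no non-invariant lc place exists, one must control the discrepancies at the relevant singular points so that a canonical model with a uniformly bounded index exists. We expect to handle this by cutting by a general hyperplane to reduce to surface (foliation) singularities. There, ACC for minimal log discrepancies on surfaces \cite{Ale93} shows that the discrepancies lie in a finite set near $0$, which forces them to be exactly $0$ or to produce a boundary with coefficients in a fixed finite set. This closes the gap, and algebraic rank and rank are unchanged because the modification is birational.
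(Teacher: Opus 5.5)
The outer reductions are acceptable modulo citations, but there is a genuine gap at the lc-to-canonical step. After the F-dlt modification with $B_Y=0$ you conclude that every exceptional divisor of negative discrepancy is non-invariant with discrepancy exactly $-1$. Extracting such divisors would then only create boundaries with coefficients in $\{0,1\}$. Nothing forces this.

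Locally, take $X=\frac1n(1,1)\times\mathbb A^1$ with $\Ff$ tangent to the surface factor. This germ is $\mathbb Q$-factorial klt and F-dlt with $B=0$. The divisor $E$ given by the exceptional curve of the minimal resolution times $\mathbb A^1$ is non-invariant, and $a(E,\Ff)=-1+\frac2n\in(-1,0)$. Extracting such divisors produces a boundary $bF$ whose coefficient is a priori arbitrary in $(0,1)$. The whole content of this step is to show that $b$ ranges in a DCC set independent of $X$.

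Your ``main obstacle'' paragraph does not supply this:
\begin{enumerate}
\item Alexeev's theorem \cite{Ale93} concerns minimal log discrepancies of ordinary surface pairs, so it says nothing directly about foliated discrepancies.
\item It yields an ACC set of mlds (e.g.\ $\frac2n$ above), hence DCC coefficients, not a finite set, and it forces no discrepancy to vanish.
\item You do not say how to extract divisors of negative foliated discrepancy; \cite[Corollary 1.4.3]{BCHM10} needs negativity of the discrepancy with respect to $X$.
\end{enumerate}

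The missing idea is the comparison between foliated and ordinary discrepancies, which the paper uses as follows.
\begin{enumerate}
\item Since the model is $\mathbb Q$-factorial dlt, $\Ff$ is non-dicritical \cite[Theorem 11.3]{CS21}. So a divisor $E$ with $a(E,\Ff)<0$ (necessarily non-invariant, as $\Ff$ is lc) has center a curve $W$ rather than a point.
\item $W$ cannot be tangent to $\Ff$, since otherwise $E$ would be invariant.
\item Along such a transverse curve, \cite[Lemma 3.11]{Spi20} gives $a(D,\Ff)=a(D,X)$ for divisors $D$ centered on $W$. In particular $\mld_W(X)\le 1+a(E,\Ff)<1$.
\item Take a single divisor $F$ computing $\mld_W(X)$. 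A general hyperplane section shows that $\mld_W(X)$ is the mld of a klt surface germ. Hence $b:=-a(F,X)=-a(F,\Ff)$ lies in a fixed DCC set by \cite{Ale93}.
\item Extract $F$ by BCHM, which is legitimate since $a(F,X)<0$. This gives $K_{\Ff'}+bF=h^*K_\Ff\equiv 0$, and the $B\neq 0$ case with this DCC coefficient set finishes.
\end{enumerate}

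In the paper this argument is needed only for $\rk\Ff=2$, $\rk_{\alg}\Ff=1$. The purely transcendental case, which contains every non-algebraically-integrable rank one foliation, is settled first via \cite{CP19}. There, a non-canonical singularity or $B\neq 0$ would make the canonical class of a resolution non-pseudo-effective, hence create an algebraic part. Your uniform treatment across ranks would need analogues of these facts for rank one foliations, which you do not address.

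A smaller point: in the algebraically integrable step, $K_{\Ff'}+B'$ is $\mathbb Q$-linearly the pullback of the moduli part alone, not of $K_Z+B_Z+M_Z$. The paper simply invokes \cite[Proposition 8.1]{LLM23} there.
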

\begin{proof}
We divide the proof in several steps.

\medskip

\noindent\textbf{Step 1.} We reduce to the case when $(X,\Ff,B)$ is $\mathbb Q$-factorial dlt in the sense of \cite[Definition 5.2]{LLM23}. In particular, in the rest of the proof, we may assume that $X$ is $\mathbb Q$-factorial klt.

We may assume that $\rk\Ff>0$, otherwise the theorem is trivial. We may assume that $\rk\Ff<\dim X$, otherwise the theorem follows from \cite[Theorem 1.8.7]{Xu20}. By \cite[Theorem 1.1]{LLM23}, we may assume that $\Ii$ is a finite set. Possibly replacing $\Ii$ with $\Ii\cup\{1\}$, we may assume that $1\in\Ii$. Possibly replacing $(X,\Ff,B)$ with a $\mathbb Q$-factorial dlt modification in the sense of \cite[Theorem 5.8]{LLM23}, we may assume that $(X,\Ff,B)$ is $\mathbb Q$-factorial dlt in the sense of \cite[Definition 5.2]{LLM23}.

\medskip

\noindent\textbf{Step 2.} We reduce to the case when $\Ff$ is not algebraically integrable and $\dim X=3$. If $\Ff$ is algebraically integrable, then we let $g: (Y,\Ff_Y,B_Y;G)/Z\rightarrow (X,\Ff,B)$ be a $\mathbb Q$-factorial ACSS modification associated with contraction $f: Y\rightarrow Z$ \cite[Definition 3.3]{LMX25}. We have
\begin{equation}
    K_{\Ff_Y}+B_Y=g^*(K_{\Ff}+B)\equiv 0.
\end{equation}
By \cite[Proposition 8.1]{LLM23} there exists a positive integer $I$ depending only on $\Ii$ such that $I(K_{\Ff_Y}+B_Y)\sim 0$, hence $I(K_{\Ff}+B)\sim 0$ and the proposition follows. In the following, we may assume that $\Ff$ is not algebraically integrable. 

By \cite[Lemma 7.2]{LLM23} (based on \cite[Theorem 1]{Per05}), we may assume that $\dim X=3$. 

\medskip

\noindent\textbf{Step 3.} We reduce to the case when $\dim X=3,\rk\Ff=2$, and $\rk_{\alg}\Ff=1$.

Assume that $\Ff$ is purely transcendental. Then we may assume that $B\not=0$ or $\Ff$ is not canonical. If $B\not=0$, then $K_{\Ff}$ is not pseudo-effective, which contradicts \cite[Theorem 1.1]{CP19}. Thus $\Ff$ is not canonical. Let $h: X'\rightarrow X$ be a resolution of $\Ff$ which exists as $\dim X\leq 3$ (see \cite{Sei68,Can04,MP13}, or \cite[Theorem 4.5]{LLM23} for a summary). Then $K_{\Ff'}$ is not pseudo-effective, so $\Ff'$ has non-trivial algebraic part by \cite{CP19}, hence $\Ff$ has non-trivial algebraic part, which is also a contradiction. Thus we may assume that $\Ff$ is not purely transcendental. Since $\dim X\leq 3$ and $\Ff$ is neither purely transcendental nor algebraically integrable, we have $\dim X=3,\rk\Ff=2$, and $\rk_{\alg}\Ff=1$. 

\medskip

\noindent\textbf{Step 4.} We reduce to the case when $B=0$.

If $B\not=0$, then by \cite[Lemma 8.2]{LLM23}, we may assume that there exists a contraction $f: X\rightarrow Z$ such that $\dim X=2$ and $\Ff=f^{-1}\Ff_Z$ for some foliation $\Ff_Z$ on $Z$, and the theorem follows from \cite[Proposition 8.1]{LLM23}. Thus we may assume that $B=0$. 

\medskip

\noindent\textbf{Step 5.} We conclude the proof in this step. By our assumption, we may assume that $\Ff$ is not canonical. We let $E$ be a prime divisor over $X$ such that $a(\Ff,E)<0$ and let $W:=\Center_XE$. Since $\Ff$ is lc, $E$ is not $\Ff$-invariant. Since $B=0$, $\dim W\leq 1$. Since $(X,\Ff,B)$ is $\mathbb Q$-factorial dlt, $\Ff$ is non-dicritical \cite[Theorem 11.3]{CS21}, so $\dim W=1$ by the definition of non-dicritical \cite[Definition 2.10]{CS21}. If $W$ is tangent to $\Ff$ (cf. \cite[Definition 2.24]{Spi20}), then there exists a prime divisor $E'$ over $X$ such that $\Center_XE'=W$ and $E'$ is $\Ff$-invariant. Thus $E$ is $\Ff$-invariant by \cite[Remark 2.16]{CS21}, a contradiction. Hence $W$ is transverse to $\Ff$. 

Since $(X,\Ff)$ is $\mathbb Q$-factorial dlt, $X$ is klt \cite[Theorem 11.3]{CS21}. We let $F$ be a prime divisor over $X$ such that $\Center_XF=W$ and
\begin{equation}
 1>a(F,X)+1=\mld_W(X):=1+\inf\{a(D,X)\mid \Center_XD=W\},
\end{equation}
where $D$ runs through all prime divisors over $X$. Since $\dim W=1$, by the ACC for mlds on surfaces \cite[Theorem 3.2]{Ale93}, we have that $a(F,X)$ belongs to an ACC set in $[-1,0]\cap\mathbb Q$ depending only on $\Ii$. By \cite[Lemma 3.11]{Spi20}, we have
$$a(F,X)=a(F,\Ff)\leq a(E,X)<0.$$
We let $h: X'\rightarrow X$ be the extraction of $F$ whose existence is guaranteed by \cite[Corollary 1.4.3]{BCHM10}. Then we have
\begin{equation}
K_{\Ff'}+bF=h^*K_{\Ff}\equiv 0,
\end{equation}
where $\Ff'=h^{-1}\Ff$ and $b:=-a(F,X)$ belongs to a DCC set depending only on $\Ii$. By \textbf{Step 4}, there exists a positive integer $I$ depending only on $\Ii$ such that $I(K_{\Ff'}+bF)\sim 0$, hence $IK_{\Ff}\sim 0$. The proposition follows.
\end{proof}

\section{Preliminaries on Lie theory}

The proof in the non-algebraically integrable cases uses elementary structure theory of commutative algebraic groups and their equivariant compactifications. We collect the required facts in this section.

\medskip

\noindent\textbf{Semi-abelian varieties and periods.} A \emph{semi-abelian variety} is a connected commutative algebraic group $S$ fitting into an exact sequence
\[
    0\longrightarrow G\longrightarrow S\longrightarrow A\longrightarrow 0,
\]
where $G\cong \mathbb G_m^t$ is an algebraic torus and $A$ is an abelian variety. The analytic exponential map of $S$ is denoted by
\[
    \exp_S:\Lie S\longrightarrow S(\C).
\]
Its kernel
\[
    \Lambda_S:=\ker(\exp_S)
\]
is called the \emph{period group} of $S$. If $\dim A=g$, then $\Lambda_S$ is a free abelian group of rank $t+2g$, and it spans $\Lie S$ over $\C$.

\begin{lem}\label{lem:lattice}
Let $S$ be a connected semi-abelian variety over $\C$ with $\dim S\le3$, and let
\[
    \Lambda=\ker(\exp_S:\Lie S\to S(\C)).
\]
Let $T:\Lie S\to\Lie S$ be a finite-order $\mathbb C$-linear automorphism such that $T(\Lambda)=\Lambda$. Let $W\subset\Lie S$ be a $T$-stable linear subspace. Then $T|_W$ induces an automorphism of the one-dimensional vector space $\det(W^\vee)$; if this automorphism is multiplication by $\chi_W(T)\in\C^*$, then $\chi_W(T)$ has order at most $30$.
\end{lem}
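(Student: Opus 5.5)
The approach is to show that $\chi_W(T)$ is a root of unity of degree at most $3$ over $\mathbb Q$. The only cyclic groups generated by such roots of unity have order at most $30$. Since $\Lambda$ spans $\Lie S$ over $\C$ and has rank $t+2g$, the restriction $T|_\Lambda$ is an automorphism of a free abelian group of rank $n:=t+2g\le 2\dim S\le 6$. Moreover, $\Lie S$ is a quotient of $\Lambda\otimes_{\mathbb Z}\C$ compatible with $T$. It is a quotient rather than an isomorphism when $g>0$: the natural map $\Lambda\otimes\C\to\Lie S$ is surjective, and its kernel is $T$-stable. Hence the eigenvalues of $T$ on $\Lie S$ form a sub-multiset of the roots of the characteristic polynomial $P_T(x)\in\mathbb Z[x]$ of $T|_\Lambda$. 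That polynomial has degree $n\le6$, and all its roots are roots of unity because $T$ has finite order.

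Next I will describe $\chi_W(T)$. Since $T$ has finite order, it is diagonalizable on $W$. The action on $\det(W^\vee)$ is multiplication by $\prod_i\lambda_i^{-1}$, where $\lambda_1,\dots,\lambda_k$ ($k=\dim W\le 3$) are the eigenvalues of $T|_W$. So $\chi_W(T)$ is the inverse of a product of at most three eigenvalues $\lambda_i$ of $T$ on $\Lie S$. Each $\lambda_i$ is a root of $P_T$, so it is a primitive $m$-th root of unity with $\varphi(m)\le 6$. This forces $m\in\{1,2,3,4,5,6,7,8,9,10,12,14,18\}$.

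Bounding the orders of the $\lambda_i$ separately is not enough, since a product of roots of unity of orders $7$, $9$ and $8$ would have large order. The key refinement uses that $P_T$ is a product of cyclotomic polynomials with total degree $\le6$. The distinct orders $m_j$ that occur therefore satisfy $\sum_j\varphi(m_j)\le 6$, where the sum runs over distinct $m_j$. The order of the product $\prod\lambda_i$ divides $\operatorname{lcm}(m_j)$. I will now maximize $\operatorname{lcm}(m_1,\dots,m_r)$ subject to $\sum\varphi(m_j)\le6$ with the $m_j$ distinct. The candidates are:
\begin{itemize}
\item $\{5,3\}$ or $\{10,3\}$, with $\varphi$-sum $6$ and lcm $30$;
\item $\{4,3\}$ with lcm $12$;
\item $\{4,5\}$, with $\varphi$-sum $6$ and lcm $20$;
\item $\{8,3\}$, with $\varphi$-sum $6$ and lcm $24$;
\item $\{5,6\}$ with lcm $30$;
\item $\{7\}$, $\{9\}$ and $\{14\}$, with lcm $7$, $9$ and $14$;
\item $\{3,4,\ldots\}$, which is limited by the budget of $6$.
\end{itemize}
Orders $1$ and $2$ contribute $\varphi=1$ but do not raise the lcm beyond a factor of $2$; the triple $\{3,4,2\}$ gives $12$. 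A short case check then shows the maximum lcm is $30$, attained by $\{5,3\}$ with $\varphi$-sum $4+2$ and by $\{10,6\}$. This gives $\operatorname{ord}\chi_W(T)\le30$.

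The main obstacle is to justify correctly the passage from $\Lie S$ to $\Lambda$, namely that the eigenvalues on $\Lie S$ are among those of $T|_\Lambda$ with the distinct-order constraint. This follows because the kernel of $\Lambda\otimes\C\to\Lie S$ is $T$-stable: the characteristic polynomial of $T$ on $\Lie S$ divides $P_T$. Once this is established, the rest is the finite enumeration above.
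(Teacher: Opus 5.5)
Your argument is correct and is essentially the paper's. You reduce to $T|_\Lambda\in\operatorname{GL}_n(\Z)$ with $n=t+2g\le 6$, write its characteristic polynomial as a product of cyclotomic factors whose distinct orders satisfy $\sum\varphi(m_j)\le 6$, and bound the order of the determinant character by the maximal $\operatorname{lcm}$, which is $30$. Your use of the $T$-equivariant surjection $\Lambda\otimes_{\Z}\C\to\Lie S$ is just a more explicit form of the paper's remark that $\Lambda$ spans $\Lie S$.

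There are two slips to fix in the write-up:
\begin{itemize}
\item Your opening claim that $\chi_W(T)$ has degree at most $3$ over $\mathbb Q$ is false and is not what you prove. A primitive fifth root of unity can occur, for example from an eigenline of the order-five automorphism acting on the Lie algebra of the Jacobian of $y^2=x^5+1$.
\item $\operatorname{lcm}(5,3)=15$, not $30$. The value $30$ is attained by $\{5,6\}$, $\{10,3\}$ and $\{10,6\}$. A complete check shows nothing exceeds it: an order with $\varphi=6$ must occur alone; an order with $\varphi=4$ leaves room only for one further order with $\varphi=2$ or for orders in $\{1,2\}$; otherwise all orders lie in $\{1,2,3,4,6\}$.
\end{itemize}
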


\begin{proof}
Write $0\to G\to S\to A\to0$, where $G\cong\mathbb G_m^t$ and $A$ is an abelian variety of dimension $g$. Since $t+g=\dim S\le3$, the period group has rank
\[
    \rho=t+2g\le6.
\]
It spans $\Lie S$ over $\C$. Thus $T$ is determined by its restriction to $\Lambda$, and the order of $T$ equals the order of the finite-order element $T|_\Lambda\in\operatorname{GL}_\rho(\Z)$.

The characteristic polynomial of $T|_\Lambda$ is a product of cyclotomic polynomials $\Phi_d$, with
\[
    \sum \varphi(d)\le \rho\le 6.
\]
The possible $d$ with $\varphi(d)\le6$ are
\[
    1,2,3,4,5,6,7,8,9,10,12,14,18.
\]
A direct check under the constraint $\sum\varphi(d)\le6$ shows that the largest possible least common multiple of the occurring $d$ is $30$. Hence $\operatorname{ord}(T)\le30$, where $\operatorname{ord}(T)$ is the smallest positive integer $n$ such that $T^n=\operatorname{id}$. Since the action on $\det(W^\vee)$ is induced by $T|_W$, its order divides $\operatorname{ord}(T)$.
\end{proof}

\begin{defn}[Translation-invariant foliation]
Let $A$ be a connected algebraic group, and let $W\subset \Lie A$ be a linear subspace. We write $TA$ for the tangent bundle of the smooth variety $A$. Left-invariant vector fields give the canonical trivialization
\[
    TA \cong \Lie A\otimes_{\mathbb C}\cO_A.
\]
Under this trivialization, $W$ defines a locally free subsheaf
\[
    \cG_W:=W\otimes_{\mathbb C}\cO_A\subset TA .
\]
If $W$ is closed under the Lie bracket, then $\cG_W$ is a foliation on $A$, called the \emph{translation-invariant foliation} induced by $W$. In particular, if $A$ is commutative, then every linear subspace $W\subset\Lie A$ induces such a foliation.
\end{defn}

\begin{defn}
Let $A$ be a connected algebraic group and let $W\subset\Lie A$ be a linear subspace. We say that $W$ \emph{algebraically generates} $A$ if no proper connected algebraic subgroup $B\subsetneq A$ satisfies $W\subset\Lie B$.
\end{defn}

\begin{lem}\label{lem:rank-zero-generation}
Let $r$ be a positive integer. Let $X$ be a normal projective variety and let $A$ be a connected commutative algebraic group acting on $X$ with dense open orbit $U\cong A$. Let $W\subset\Lie A$ be an $r$-dimensional linear subspace, and let $\Ff$ be the foliation induced on $U$ by $W$.

If $\dim A=r+1$ and $\Ff$ is purely transcendental, then $W$ algebraically generates $A$.
\end{lem}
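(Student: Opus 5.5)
We argue by contraposition: suppose $W$ does not algebraically generate $A$, and show that $\Ff$ has positive algebraic rank.

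\textbf{Step 1: a proper subgroup containing $W$.} By assumption there is a proper connected algebraic subgroup $B\subsetneq A$ with $W\subset\Lie B$. Since $\dim W=r$ and $\dim A=r+1$, we get $\dim B=r$ and hence $W=\Lie B$.

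\textbf{Step 2: $\Ff$ is the foliation by $B$-cosets.} Because $A$ is commutative, the translation-invariant foliation $\cG_W=\Lie B\otimes\cO_A$ is tangent to the left cosets $aB$. These cosets are the fibres of the quotient morphism
\[
    q\colon A\to A/B .
\]
Here $A/B$ is a connected algebraic group of dimension $1$, so it is a smooth curve. Therefore $\Ff|_U=\ker(dq)=q^{-1}\cF_{A/B}$, where $\cF_{A/B}$ is the zero foliation on $A/B$. The equality of saturated subsheaves holds because both have rank $r$ and the fibres of $q$ are smooth.

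\textbf{Step 3: extend to $X$ and bound the algebraic rank.} On $X$, the rational map $f\colon X\dashrightarrow \overline{A/B}$ is dominant. The foliation $\Ff$, being the saturation in $TX$ of its restriction to the dense open set $U$, agrees with $f^{-1}(0)$, since two saturated subsheaves that coincide on a dense open set are equal. By the definition of algebraic rank, the pair consisting of $f$ and the zero foliation on the curve shows that $\Ff$ is induced from a curve; the leaves are the closures of the $B$-cosets, which are algebraic subvarieties of dimension $r\ge1$. Hence $\rk_{\alg}\Ff=r\geq 1$. This contradicts the assumption that $\Ff$ is purely transcendental.

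The only delicate point is the bookkeeping in Step 3: one must check that the algebraic-rank definition, as phrased via pullback foliations $\Ff=f^{-1}\Ff_Z$, is matched correctly, i.e. that the leaves of $\Ff$ are indeed the fibres of $f$. The positivity $r\ge1$ is what ensures these leaves are positive-dimensional, so the algebraic rank is nonzero.
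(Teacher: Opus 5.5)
Your proof is correct and follows the same route as the paper. The dimension count forces $\Lie B=W$, and then the $B$-cosets, i.e.\ the orbits of $B$ through general points of $U$, are $r$-dimensional algebraic subvarieties tangent to $\Ff$, contradicting pure transcendence. The paper stops at that observation, while you also write $\Ff=f^{-1}(0)$ for the rational map $X\dashrightarrow\overline{A/B}$, which is extra but harmless bookkeeping.
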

\begin{proof}
If $B\subsetneq A$ is connected and $W\subset\Lie B$, then
\[
    r=\dim W\le \dim B<\dim A=r+1,
\]
hence $\dim B=r$ and $\Lie B=W$. For a general point $x\in U$, the orbit $B\cdot x$ is an $r$-dimensional algebraic subvariety tangent to $\Ff$. This contradicts the purely transcendental assumption.
\end{proof}

\begin{lem}\label{lem:generation-after-cover}
Let $\phi:A'\to A$ be a finite surjective homomorphism between connected commutative algebraic groups, and let $W\subset\Lie A$ be a linear subspace. Set
\[
    W':=(d\phi)^{-1}(W)\subset\Lie A'.
\]
If $W$ algebraically generates $A$, then $W'$ algebraically generates $A'$.
\end{lem}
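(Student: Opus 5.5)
We argue by contraposition: a proper connected subgroup of $A'$ containing $W'$ in its Lie algebra will be pushed forward to a proper connected subgroup of $A$ containing $W$. So suppose $B'\subsetneq A'$ is a connected algebraic subgroup with $W'\subset\Lie B'$, and set $B:=\phi(B')$.

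Since $\phi$ is a homomorphism of algebraic groups, $B$ is a closed connected algebraic subgroup of $A$; it is closed because images of algebraic group homomorphisms are closed. The key fact is that $\phi$ is finite, hence has finite kernel, so $d\phi:\Lie A'\to\Lie A$ is an isomorphism. Indeed, $\ker d\phi=\Lie(\ker\phi)=0$ in characteristic zero, and $\dim A'=\dim A$ because $\phi$ is finite and surjective. The same reasoning applied to the finite surjective homomorphism $\phi|_{B'}:B'\to B$ gives $\dim B=\dim B'$ and $\Lie B=d\phi(\Lie B')$.

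Next we check the two required properties of $B$. First, $B$ is proper. Otherwise $\dim B'=\dim B=\dim A=\dim A'$, and since $A'$ is connected this forces $B'=A'$, a contradiction. Second, $W\subset\Lie B$. Because $d\phi$ is bijective, $W=d\phi(W')$, and therefore $W\subset d\phi(\Lie B')=\Lie B$. Together these contradict the assumption that $W$ algebraically generates $A$.

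The only point needing care is the identification $\Lie\phi(B')=d\phi(\Lie B')$ together with the bijectivity of $d\phi$. Both follow from the standard fact that in characteristic zero a homomorphism with finite kernel is étale onto its image. Nothing else in the argument is subtle.
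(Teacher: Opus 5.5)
Your proposal is correct and follows the paper's argument. You push $B'$ forward to $B=\phi(B')$, obtain $W\subset\Lie B$, and use $\dim B'=\dim B$ from finiteness to contradict the properness of $B'$. You also make explicit that $d\phi$ is an isomorphism in characteristic zero, so that $W=d\phi(W')$; the paper uses this step without stating it.
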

\begin{proof}
Assume that $B'\subset A'$ is connected and $W'\subset\Lie B'$. Then $B:=\phi(B')$ is connected and $W\subset\Lie B$. Thus $B=A$. Since $\phi$ is finite, $\dim B'=\dim B=\dim A=\dim A'$, so $B'=A'$.
\end{proof}

\begin{lem}[Finite refinements of equivariant compactifications]\label{lem:finite-refinement-equivariant}
Let $P$ be a normal projective variety with an action of a connected commutative algebraic group $A$, and suppose that $P$ contains a dense open orbit $U\cong A$. Let $h:R\to P$ be a finite surjective quasi-\'etale morphism, with $R$ normal and irreducible, and set $U_R:=h^{-1}(U)$. After choosing a point of $U_R$ over the identity of $A$, the open set $U_R$ has a unique structure of connected commutative algebraic group $A_R$ such that
\[
    \phi_R:=h|_{U_R}:A_R\to A
\]
is a finite \'etale homomorphism. Moreover, the translation action of $A_R$ on $U_R=A_R$ extends uniquely to an algebraic action $A_R\times R\to R$, and $h$ is equivariant with respect to $\phi_R$.
\end{lem}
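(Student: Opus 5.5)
The plan is to treat the restriction $h_U:=h|_{U_R}:U_R\to U\cong A$ as an étale cover of a connected algebraic group and apply the standard fact that connected finite étale covers of connected algebraic groups are again algebraic groups. The action is then extended to $R$ by a normality argument.

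First, since $h$ is quasi-étale and $U$ is smooth, purity of the branch locus (Zariski–Nagata) shows that $h_U$ is finite étale. Because $R$ is irreducible, $U_R$ is connected. Fix $e_R\in U_R$ over the identity $e\in A$.

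Next, to get the group law, consider the composition $A_R\times A_R\to A\times A\xrightarrow{m}A$. This pulls back the étale cover $h_U$ to a finite étale cover of the connected variety $U_R\times U_R$. That cover has a section over $(e_R,e_R)$ picking out $e_R$, and its analytic lift extends uniquely by covering space theory, since $\pi_1$ of the base maps into $\pi_1(A)$ via the pushforward of the product. Concretely, I would use the analytic description: $\pi_1(A)$ is abelian and $h_U$ corresponds to a finite-index subgroup, so $U_R^{an}$ is the quotient of the universal cover $\widetilde A$ (a complex Lie group) by a subgroup $\Gamma'$ of finite index in the central discrete group $\Gamma=\pi_1(A)$. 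This makes $U_R^{an}$ a commutative complex Lie group. The multiplication and inverse are algebraic by the Riemann existence theorem (GAGA for finite étale covers), since they are lifts of algebraic maps through finite étale covers. The structure is unique given $e_R$ by uniqueness of lifts, and $\phi_R$ is then a finite étale homomorphism.

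The extension of the action is the main obstacle, since $R$ may be singular along $R\setminus U_R$. I would first try a normalization argument. The map $A_R\times R\to A\times P\to P$ together with $h$ gives a candidate. Let $R'$ be the normalization of $P$ in the function field of $R$; then $R'\cong R$, because $R$ is normal and finite over $P$. Now consider $A_R\times R$, which is normal, and the rational map $\sigma:A_R\times R\dashrightarrow R$ defined on $A_R\times U_R$ by the group law. The graph closure $\Gamma_\sigma\subset A_R\times R\times R$ is finite over $A_R\times R$, because over $A_R\times R$ it maps into the fiber product with $R\to P$ via the action $A\times P\to P$, and $R\to P$ is finite. The projection $\Gamma_\sigma\to A_R\times R$ is birational, so by Zariski's main theorem with $A_R\times R$ normal it is an isomorphism. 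Hence $\sigma$ is a morphism. The action axioms hold on the dense open $A_R\times A_R\times U_R$, so they hold everywhere since $R$ is separated. Equivariance of $h$ also holds on a dense open set and therefore everywhere. Uniqueness of the extension follows from density of $A_R\times U_R$ together with separatedness.
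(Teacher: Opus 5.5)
Your proposal is correct and follows essentially the same route as the paper. Purity makes $h|_{U_R}$ finite \'etale. The group law is then lifted through the cover; your analytic description $U_R^{\mathrm{an}}\cong\widetilde A/\Gamma'$, together with algebraicity of lifts, is a repackaging of the paper's lifting-criterion argument with $H+H=H$. The action is extended by taking the closure of the graph of the group law inside the finite fiber product $\{(a,x,y): h(y)=\phi_R(a)\cdot h(x)\}\to A_R\times R$, and using that a finite birational morphism onto the normal variety $A_R\times R$ is an isomorphism. The paper phrases this last step via the normalization of the closure rather than invoking Zariski's main theorem directly, but the content is the same.
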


\begin{proof}
Identify $U$ with $A$ so that the chosen point of $U$ is the identity. The restriction
\[
    h_U:=h|_{U_R}:U_R\to U\cong A
\]
is finite. Since $h$ is quasi-\'etale, $h_U$ is \'etale at every codimension one point of $U_R$; because the target $A$ is smooth, Zariski--Nagata purity \cite[Expos\'e~X, Th\'eor\`eme~3.1]{SGA1} implies that $h_U$ is finite \'etale. As $R$ is irreducible, $U_R$ is connected. Choose $e_R\in U_R$ over the identity $e_A\in A$.

The pointed finite \'etale cover $h_U:(U_R,e_R)\to(A,e_A)$ corresponds to a finite-index subgroup
\[
    H=(h_U)_*\pi_1(U_R,e_R)\subset\pi_1(A,e_A).
\]
Since $A(\C)$ is a connected commutative topological group, multiplication on $A$ induces addition on $\pi_1(A,e_A)$ and inversion induces multiplication by $-1$. Hence $H$ is stable under the operations needed to lift multiplication and inversion, i.e. $H+H=H$ and $H^{-1}=H$. By the lifting criterion for finite \'etale covers, $m_A\circ(h_U\times h_U)$ and $i_A\circ h_U$ lift uniquely to pointed morphisms
\[
    m_R:U_R\times U_R\to U_R,
    \qquad
    i_R:U_R\to U_R.
\]
The group axioms, and commutativity, follow because both sides of each axiom are pointed lifts of the same morphism to $A$. This gives the unique connected commutative algebraic group structure on $U_R$ for which $h_U$ is a homomorphism; denote this group by $A_R$ and write $\phi_R=h_U$.

It remains to extend translations to $R$. Put $B:=A_R\times R$, which is normal, and define
\[
    F:B\to P,
    \qquad
    F(a,x):=\phi_R(a)\cdot h(x).
\]
Let $T:=B\times_{P,h}R$. The projection $T\to B$ is finite. On the dense open subset $B^\circ:=A_R\times A_R\subset B$, the group law of $A_R$ defines a section
\[
    s^\circ:B^\circ\to T,
    \qquad
    (a,b)\longmapsto(a,b,ab),
\]
because $h(ab)=\phi_R(a)\phi_R(b)=\phi_R(a)\cdot h(b)$. Let $T_0$ be the scheme-theoretic closure of $s^\circ(B^\circ)$ in $T$, and let $T_0^\nu\to T_0$ be the normalization. Then $T_0^\nu\to B$ is finite and birational; since $B$ is normal, it is an isomorphism. Composing $B\cong T_0^\nu$ with the projection to $R$ gives a regular morphism
\[
    \mu_R:A_R\times R\to R.
\]
It restricts to the group law on $A_R\times A_R$ and satisfies
\[
    h(\mu_R(a,x))=\phi_R(a)\cdot h(x).
\]
The identity and associativity axioms hold on the dense open subset $A_R\times A_R\times A_R$ and hence everywhere, since $R$ is separated. Thus $\mu_R$ is an algebraic action extending translations. The displayed identity gives equivariance of $h$, and uniqueness follows because two such actions agree on the dense open subset $A_R\times A_R$.
\end{proof}

\noindent\textbf{Unipotent directions.}
We use the following standard structure facts for algebraic groups over $\C$.
An algebraic group $G$ is called \emph{unipotent} if every nonzero finite-dimensional representation of $G$ has a nonzero fixed vector; equivalently, every finite-dimensional representation of $G$ is unipotent in the sense that, after choosing a basis, its image is contained in an upper triangular unipotent group \cite[Proposition~14.3]{MilneAG}.
Let $A$ be a connected commutative algebraic group over $\C$. By the Barsotti--Chevalley theorem, there is a unique connected affine normal subgroup
\[
    A_{\mathrm{aff}}\subset A
\]
such that $B:=A/A_{\mathrm{aff}}$ is an abelian variety \cite[Theorem~8.27]{MilneAG}. Since $A$ is commutative, $A_{\mathrm{aff}}$ is a smooth connected affine solvable group. By the structure theorem for smooth connected solvable groups over a perfect field, $A_{\mathrm{aff}}$ has a normal unipotent subgroup $(A_{\mathrm{aff}})^u$ which contains every unipotent algebraic subgroup of $A_{\mathrm{aff}}$, and
\[
    A_{\mathrm{aff}}/(A_{\mathrm{aff}})^u
\]
is a torus \cite[Theorem~16.33]{MilneAG}. Every unipotent subgroup of $A$ is affine, hence is contained in $A_{\mathrm{aff}}$. Thus
\[
    A^{\mathrm{unip}}:=(A_{\mathrm{aff}})^u
\]
is the maximal connected unipotent subgroup of $A$. It is characteristic in $A$, because $A_{\mathrm{aff}}$ is uniquely characterized by the Barsotti--Chevalley theorem and $(A_{\mathrm{aff}})^u$ is uniquely characterized inside $A_{\mathrm{aff}}$ as the maximal connected unipotent subgroup. Moreover, since $A$ is commutative, $A^{\mathrm{unip}}$ is a vector group, i.e.
\[
    A^{\mathrm{unip}}\simeq \mathbb G_a^n
\]
for some $n$ \cite[Corollary~14.33]{MilneAG}. Finally, quotienting the Barsotti--Chevalley sequence by $A^{\mathrm{unip}}$ gives an exact sequence
\[
    0\to A_{\mathrm{aff}}/A^{\mathrm{unip}}
    \to A/A^{\mathrm{unip}}
    \to B\to 0,
\]
where $A_{\mathrm{aff}}/A^{\mathrm{unip}}$ is a torus and $B$ is an abelian variety. Therefore $A/A^{\mathrm{unip}}$ is semi-abelian, i.e. an extension of an abelian variety by a torus.

\begin{lem}[No unipotent direction]\label{lem:no-unipotent-direction}
Let $A$ be a connected commutative algebraic group acting on a normal projective variety $R$ with dense open orbit $U\cong A$. Let $\cG$ be induced on $U$ by a linear subspace $W\subset\Lie A$. If $\cG$ has algebraic rank zero, then
\[
    W\cap\Lie A^{\mathrm{unip}}=0.
\]
\end{lem}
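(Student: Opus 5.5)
We argue by contradiction. Suppose there is a nonzero vector $v\in W\cap\Lie A^{\mathrm{unip}}$. Since $A^{\mathrm{unip}}\simeq\mathbb G_a^n$ is a vector group, the exponential map of $A^{\mathrm{unip}}$ is an algebraic isomorphism $\Lie A^{\mathrm{unip}}\cong A^{\mathrm{unip}}$. Hence the line $\C v$ exponentiates to a one-dimensional connected algebraic subgroup
\[
    L:=\exp(\C v)\cong\mathbb G_a\subset A^{\mathrm{unip}}\subset A,
\]
with $\Lie L=\C v\subset W$. This is the one point where unipotence is used. For a torus or abelian direction, a single vector of $W$ generally does not exponentiate to a closed algebraic subgroup; this failure is exactly what permits transcendental leaves.

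Next we show that the orbits of $L$ are tangent to $\cG$. Fix a point $x\in U$ and identify $U$ with $A$, so that $x$ corresponds to an element $a\in A$. The orbit $L\cdot x$ is the translate $a+L$, which is a closed algebraic curve in $U$ isomorphic to $\mathbb A^1$. Because $A$ is commutative, left and right translations coincide, so the tangent space of $a+L$ at every point $y$ is the translate of $\Lie L=\C v$. Under the trivialization $TA\cong\Lie A\otimes\cO_A$ this is the constant subspace $\C v$. Since $\C v\subset W$ and $\cG|_U=W\otimes\cO_U$, the curve $a+L$ is tangent to $\cG$ at every point.

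Finally we take closures. Let $C$ be the closure of $a+L$ in $R$; it is a positive-dimensional algebraic subvariety. Because $a+L$ is tangent to $\cG$ on the dense open subset $C\cap U$, the subvariety $C$ is tangent to $\cG$. As $x$ ranges over $U$, these curves pass through a general point of $R$. Therefore $\cG$ has a nontrivial algebraic part. Concretely, the quotient map $A\to A/L$ is a dominant morphism of relative dimension one whose fibres are tangent to $\cG$. Hence $\cG$ is the pullback of the foliation on $A/L$ induced by $W/\C v$, which gives $\rk_{\alg}\cG\ge1$ and contradicts the assumption that $\cG$ has algebraic rank zero.

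The main obstacle is making the final deduction from "tangent algebraic curves through a general point" to $\rk_{\alg}\cG\ge1$ rigorous under the paper's definition of algebraic rank. For this we use the quotient homomorphism $U\to A/L$. It is a smooth morphism of algebraic groups, so it extends to a dominant rational map $R\dashrightarrow A/L$. The equality $\cG=q^{-1}\cG_{W/\C v}$ holds on $U$ because both sheaves equal $W\otimes\cO_U$. By saturation it then holds on all of $R$. This exhibits $\cG$ as a pullback from a variety of dimension $\dim A-1$, which is the required contradiction.
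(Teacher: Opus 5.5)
Your proposal is correct and follows the paper's argument. Like the paper, you exponentiate a nonzero vector of $W\cap\Lie A^{\mathrm{unip}}$ to a subgroup $\mathbb G_a\subset A^{\mathrm{unip}}$ and observe that its orbits through general points are algebraic curves tangent to $\cG$, which contradicts algebraic rank zero. Your extra step through the quotient $A\to A/L$, identifying $\cG$ with the pullback of the foliation induced by $W/\C v$, only makes explicit the link to the definition of algebraic rank that the paper leaves implicit.
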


\begin{proof}
Suppose that $0\ne u\in W\cap\Lie A^{\mathrm{unip}}$. Since $A^{\mathrm{unip}}\simeq\mathbb G_a^N$, the subspace $\C u\subset\Lie A^{\mathrm{unip}}$ gives a one-dimensional subgroup $U_1\simeq\mathbb G_a$ of $A^{\mathrm{unip}}$, under the exponential isomorphism (see \cite[Proposition 14.32]{MilneAG}). For a general point $p\in U$, the orbit $U_1\cdot p$ is a positive-dimensional algebraic subvariety tangent to $\cG$, because its tangent direction is generated by $u\in W$. This contradicts algebraic rank zero.
\end{proof}

\begin{lem}[No unipotent direction after a finite group cover]
\label{lem:no-unipotent-after-cover}
Let $\phi:A'\to A$ be a finite surjective homomorphism of connected commutative algebraic groups over $\C$. Let $W\subset\Lie A$ be a linear subspace and set
\[
    W':=(d\phi)^{-1}(W)\subset\Lie A'.
\]
If $W\cap\Lie A^{\mathrm{unip}}=0$, then
\[
    W'\cap\Lie (A')^{\mathrm{unip}}=0.
\]
\end{lem}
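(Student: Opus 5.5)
The plan is to show that $\phi$ maps $(A')^{\mathrm{unip}}$ isomorphically onto $A^{\mathrm{unip}}$, or at least that $d\phi$ carries $\Lie(A')^{\mathrm{unip}}$ into $\Lie A^{\mathrm{unip}}$ injectively. The vanishing statement then follows directly.

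First, since $\phi$ is a finite surjective homomorphism of connected algebraic groups over $\C$ (characteristic zero), its kernel is a finite group scheme, which is étale. Hence $\phi$ is an isogeny and $d\phi:\Lie A'\to\Lie A$ is an isomorphism. Next, the image $\phi((A')^{\mathrm{unip}})$ is a connected algebraic subgroup of $A$. It is unipotent, because a quotient of a unipotent group is unipotent: any nonzero representation of the image pulls back to a representation of $(A')^{\mathrm{unip}}$, which has a nonzero fixed vector. By maximality of $A^{\mathrm{unip}}$ among connected unipotent subgroups (recalled above),
\[
    \phi\bigl((A')^{\mathrm{unip}}\bigr)\subset A^{\mathrm{unip}},
\]
and therefore
\[
    d\phi\bigl(\Lie (A')^{\mathrm{unip}}\bigr)\subset \Lie A^{\mathrm{unip}}.
\]

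Now let $v\in W'\cap\Lie(A')^{\mathrm{unip}}$. Then $d\phi(v)\in W$ by the definition of $W'$, and $d\phi(v)\in\Lie A^{\mathrm{unip}}$ by the inclusion above. By hypothesis $d\phi(v)=0$, and since $d\phi$ is injective, $v=0$.

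The only point that needs care is the injectivity of $d\phi$. I would justify it by noting that $\ker\phi$ is a finite group scheme over a field of characteristic zero, hence reduced by Cartier's theorem. Its Lie algebra $\ker(d\phi)$ is therefore zero, and $d\phi$ is an isomorphism by dimension count, since $\dim A'=\dim A$ because $\phi$ is finite and surjective.
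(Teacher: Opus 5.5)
Your proposal is correct and follows essentially the same route as the paper: both show that $\phi((A')^{\mathrm{unip}})$ is a connected unipotent subgroup of $A$, hence contained in $A^{\mathrm{unip}}$, so that $d\phi(v)\in W\cap\Lie A^{\mathrm{unip}}=0$. Both then conclude $v=0$ from $\ker(d\phi)=\Lie(\ker\phi)=0$, since $\ker\phi$ is finite and reduced in characteristic zero. The only difference is cosmetic: you prove unipotence of the image via the fixed-vector definition, where the paper uses a faithful representation.
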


\begin{proof}
Let $v\in W'\cap\Lie(A')^{\mathrm{unip}}$. Then $d\phi(v)\in W$. By the discussion above, the image $\phi((A')^{\mathrm{unip}})$ is unipotent: indeed, if $\iota:\phi((A')^{\mathrm{unip}})\hookrightarrow \operatorname{GL}(V)$ is a faithful representation, then $\iota\circ\phi|(A')^{\mathrm{unip}}$ is a representation of the unipotent group $(A')^{\mathrm{unip}}$, so every element of $\phi((A')^{\mathrm{unip}})$ is represented by a unipotent matrix. Therefore $\phi((A')^{\mathrm{unip}})$ is a connected unipotent subgroup of $A$, hence is contained in $A^{\mathrm{unip}}$. Consequently,
\[
    d\phi(v)\in W\cap\Lie A^{\mathrm{unip}}=0.
\]
Since $\phi$ is finite over $\C$, $\ker(d\phi)=\Lie(\ker\phi)=0$. Hence $v=0$.
\end{proof}

\begin{lem}[Normalizer of the translation group]
\label{lem:normalizer-translation}
Let $A$ be a connected algebraic group, and let $f:A\to A$ be an automorphism of the variety $A$. Assume that $f$ normalizes the translation action of $A$, i.e. for every $a\in A$ there exists $\rho(a)\in A$ such that
\[
    f\circ t_a\circ f^{-1}=t_{\rho(a)},
\]
where $t_a$ denotes left translation by $a$. Then $\rho:A\to A$ is an algebraic group automorphism and
\[
    f(x)=\rho(x)\cdot f(e)
\]
for all $x\in A$. In particular, in additive notation,
\[
    f(x)=\rho(x)+f(0).
\]
If $f$ has finite order, then $\rho$ has finite order.
\end{lem}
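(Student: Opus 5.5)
We argue directly from the defining relation, in three steps.

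First, we show $\rho$ is a group homomorphism. For $a,b\in A$ we have
\[
t_{\rho(ab)}=f\circ t_{ab}\circ f^{-1}=(f\circ t_a\circ f^{-1})\circ(f\circ t_b\circ f^{-1})=t_{\rho(a)}\circ t_{\rho(b)}=t_{\rho(a)\rho(b)}.
\]
Since left translations act freely, $t_c=t_{c'}$ forces $c=c'$ (evaluate at $e$). Hence $\rho(ab)=\rho(a)\rho(b)$. Applying the same argument to $f^{-1}$, which normalizes the translations with associated map $\rho'$, gives $\rho\circ\rho'=\rho'\circ\rho=\operatorname{id}$. So $\rho$ is a bijective group homomorphism.

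Second, we derive the formula and algebraicity. Evaluate $f\circ t_a=t_{\rho(a)}\circ f$ at $e$. This gives $f(a)=\rho(a)\cdot f(e)$ for all $a$, which is the claimed formula. Rearranging, $\rho(a)=f(a)\cdot f(e)^{-1}$, which is a composition of the morphism $f$ with right multiplication by a fixed element, hence a morphism of varieties. The same reasoning applies to $\rho'=\rho^{-1}$, so $\rho$ is an algebraic group automorphism. In the commutative case, written additively, the formula reads $f(x)=\rho(x)+f(0)$.

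Third, we handle finite order. Suppose $f^n=\operatorname{id}$. Iterating the defining relation gives
\[
f^n\circ t_a\circ f^{-n}=t_{\rho^n(a)}.
\]
The left side is $t_a$, so by freeness $\rho^n(a)=a$ for all $a$. Thus $\rho^n=\operatorname{id}$.

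Nothing here is a genuine obstacle. The only point requiring care is the algebraicity of $\rho$, and the explicit formula $\rho(a)=f(a)f(e)^{-1}$ settles it without any appeal to general results on the normalizer of translation groups.
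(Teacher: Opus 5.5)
Your proposal is correct and follows the paper's proof essentially verbatim. It uses the same conjugation computation for multiplicativity, evaluation at $e$ to get $f(x)=\rho(x)\cdot f(e)$ and hence algebraicity via $\rho(a)=f(a)\cdot f(e)^{-1}$, and iteration of the conjugation relation for finite order. One small ordering point: the claim in your first step that $f^{-1}$ also normalizes translations (needed to define $\rho'$) requires $\rho$ to be surjective. That follows from the formula $\rho(a)=f(a)\cdot f(e)^{-1}$ with $f$ bijective, so the remark belongs after your second step.
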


\begin{proof}
The element $\rho(a)$ is uniquely determined. Conjugation preserves composition of translations, hence
\[
    t_{\rho(ab)}=f\circ t_{ab}\circ f^{-1}=f\circ t_a\circ t_b\circ f^{-1}=t_{\rho(a)}\circ t_{\rho(b)}=t_{\rho(a)\rho(b)}.
\]
Thus $\rho(ab)=\rho(a)\rho(b)$.

For the identity element $e\in A$,
\[
    f(a)=f(t_a(e))=(f\circ t_a\circ f^{-1})(f(e))=\rho(a)\cdot f(e),
\]
so $\rho(a)=f(a)\cdot f(e)^{-1}$. This formula shows that $\rho$ is a morphism; applying the same argument to $f^{-1}$ shows that it is an algebraic group automorphism.

If $f^n=\operatorname{id}$, then applying the conjugation relation $n$ times gives $t_a=t_{\rho^n(a)}$ for every $a\in A$. Hence $\rho^n=\operatorname{id}$.
\end{proof}

\begin{lem}[Intrinsic normalizer and determinant character]\label{lem:rank-two-intrinsic-normalizer}
Let $R$ be a normal projective variety, and let $A$ be a connected commutative algebraic group acting on $R$ with dense open orbit $U\cong A$. Let $\cG\subset TR$ be a saturated rank $k$ foliation. Suppose that a $k$-dimensional subspace $W\subset\Lie A$ induces an isomorphism
\[
    W\otimes\cO_R\cong\cG
\]
via the infinitesimal $A$-action, and suppose that $W$ algebraically generates $A$. Let $\Gamma\subset\Aut(R)$ be a finite group preserving $\cG$. Then every $\gamma\in\Gamma$ normalizes $A$, preserves $U$, and acts on $U\cong A$ by an affine automorphism
\[
    x\longmapsto L_\gamma(x)+a_\gamma,
\]
where $L_\gamma\in\Aut_{\mathrm{gp}}(A)$ has finite order. Moreover, $dL_\gamma$ preserves $W$, and under the natural identification
\[
    H^0(R,\cO_R(K_\cG))\cong\det(W^\vee),
\]
the action of $\gamma$ is the determinant-dual action induced by $dL_\gamma|_W$.
\end{lem}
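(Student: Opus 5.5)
The plan is to recover the group $A$ intrinsically from the pair $(R,\cG)$, so that any automorphism preserving $\cG$ must normalize it. Since $W\otimes\cO_R\cong\cG$, the global sections satisfy $H^0(R,\cG)\supseteq W$. I claim that the $\C$-span of $W$ inside $H^0(R,TR)$ determines $A$. Concretely, $A$ is the smallest connected algebraic subgroup of $\Aut^0(R)$ whose Lie algebra contains $W$. Here I use that $\Aut^0(R)$ is an algebraic group because $R$ is projective. Two facts make this work. First, the given $A$ acts effectively, since it acts simply transitively on $U$. Second, by the hypothesis that $W$ algebraically generates $A$, no proper connected subgroup of $A$ contains $W$ in its Lie algebra. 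To turn this into a statement about $A$ itself, I take the smallest algebraic subgroup $H\subset\Aut^0(R)$ with $W\subset\Lie H$, which is the algebraic hull of $W$. I then argue that $H\subseteq A$, and hence $H=A$. The inclusion holds because $A$ is an algebraic subgroup containing $W$ in its Lie algebra, and intersecting with $A$ preserves this property.

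Now let $\gamma\in\Gamma$. Since $\gamma$ preserves $\cG$, pushforward of vector fields gives $\gamma_*H^0(R,\cG)=H^0(R,\cG)$. Moreover $H^0(R,\cG)=W\otimes H^0(\cO_R)=W$, using that $R$ is projective and normal. Hence $\gamma_*W=W$ as a subspace of $H^0(R,TR)=\Lie\Aut^0(R)$. Conjugation by $\gamma$ is an automorphism of the algebraic group $\Aut^0(R)$, and it carries the algebraic hull of $W$ to the algebraic hull of $\gamma_*W=W$. Therefore $\gamma A\gamma^{-1}=A$, so $\gamma$ normalizes $A$.

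It follows that $\gamma$ maps $A$-orbits to $A$-orbits. The unique dense open orbit $U$ must then map to itself, so $\gamma$ preserves $U$. Identifying $U$ with $A$, the conjugation relation $\gamma t_a\gamma^{-1}=t_{\rho(a)}$ puts us in the setting of Lemma~\ref{lem:normalizer-translation}. That lemma gives $\gamma|_U(x)=L_\gamma(x)+a_\gamma$ with $L_\gamma=\rho$ a group automorphism. Since $\Gamma$ is finite, $\gamma$ has finite order, and the same lemma gives that $L_\gamma$ has finite order.

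For the last assertions, I compute the differential of the affine map $x\mapsto L_\gamma(x)+a_\gamma$. On invariant vector fields it acts by $dL_\gamma$, because translations act trivially on invariant vector fields of a commutative group. Hence $\gamma_*$ restricted to $W\subset H^0(TR)$ equals $dL_\gamma|_W$, and $dL_\gamma(W)=W$ by the previous paragraph. For the determinant, $\cO_R(K_\cG)=\det(\cG)^\vee\cong\det(W)^\vee\otimes\cO_R$. This gives $H^0(\cO_R(K_\cG))\cong\det(W^\vee)$, and the naturality of determinants under the isomorphism $\gamma^*\cG\cong\cG$ gives the dual determinant action.

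The main obstacle is the intrinsic characterization in the first paragraph. One must justify that $\Aut^0(R)$ is algebraic and that vector fields in $W$ integrate inside it. One must also check that the algebraic hull of $W$ is really $A$: a priori there could be a different group $H$ with $W\subset\Lie H$ and $H\not\subseteq A$. If that route proves delicate, the fallback is to argue directly that $H\cap A$ is an algebraic group whose identity component has Lie algebra $\Lie H\cap\Lie A\supseteq W$, and then apply the generation hypothesis.
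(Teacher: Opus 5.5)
Your proposal is correct and follows essentially the same route as the paper. Both arguments use Matsumura--Oort and the generation hypothesis to identify $A$ intrinsically as the smallest algebraic subgroup of $\Aut^0(R)$ whose Lie algebra contains $H^0(R,\cG)=W$; the paper phrases this as the Zariski closure of the flow group, which is the same object. Both then deduce that $\gamma$ normalizes $A$ and preserves the unique dense orbit $U$, and finish with Lemma~\ref{lem:normalizer-translation} and the trivialization $\cO_R(K_\cG)\cong\det(W^\vee)\otimes\cO_R$. The concern in your last paragraph is settled by your own fallback: in characteristic zero $\Lie(H\cap A)=\Lie H\cap\Lie A$, so the hull exists, is connected, lies in $A$, and hence equals $A$.
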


\begin{proof}
By the Matsumura--Oort identification \cite[Lemma~3.4]{MO67},
\[
    H^0(R,TR)=\Lie\Aut^0(R).
\]
Since $W\otimes\cO_R\cong\cG$ and $H^0(R,\cO_R)=\C$, we have $H^0(R,\cG)=W$. Let $A^{\mathrm{int}}\subset\Aut^0(R)$ be the identity component of the Zariski closure of the connected complex Lie subgroup generated by the flows of $H^0(R,\cG)$. These vector fields are the infinitesimal translations coming from the $A$-action, so $A^{\mathrm{int}}\subset A$. Its Lie algebra contains $W$; since $W$ algebraically generates $A$, it follows that $A^{\mathrm{int}}=A$.

For $\gamma\in\Gamma$, the pushforward $\gamma_*$ preserves $H^0(R,\cG)$ because $\gamma$ preserves $\cG$. Hence $\gamma$ normalizes the flow closure $A$. The dense open $A$-orbit is unique, so $\gamma(U)=U$. After identifying $U$ with $A$, the restriction $\gamma|_U$ normalizes translations; by Lemma~\ref{lem:normalizer-translation}, it has the form
\[
    \gamma|_U(x)=L_\gamma(x)+a_\gamma
\]
with $L_\gamma\in\Aut_{\mathrm{gp}}(A)$. Since $\Gamma$ is finite, $L_\gamma$ has finite order. The action on translation-invariant vector fields is $dL_\gamma$, and therefore $dL_\gamma(W)=W$.

Dualizing $W\otimes\cO_R\cong\cG$ and taking determinants gives
\[
    \cO_R(K_\cG)\cong\det(W^\vee)\otimes\cO_R.
\]
Taking global sections gives the stated one-dimensional representation, and the induced action is the determinant-dual action of $dL_\gamma|_W$.
\end{proof}
\section{The rank one case}\label{sec:rank-one}

In this section we prove the following bounded index statement for rank one foliations.

\begin{prop}\label{prop:rank-one}
Let $X$ be a $\mathbb Q$-factorial klt projective threefold and let $\Ff$ be a canonical foliation on $X$ such that $\rk\Ff=1$, $\rk_{\alg}\Ff=0$, and $K_{\Ff}\equiv 0$. Then $rK_{\Ff}\sim 0$ for some integer $1\leq r\leq 30$.
\end{prop}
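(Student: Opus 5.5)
The plan is to pass to the index one cover, apply Druel's product decomposition, use purely transcendental rank one to force the situation to be a translation-invariant foliation on an equivariant compactification of a commutative algebraic group, and then bound the Galois character with Lemma~\ref{lem:lattice}.

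\textbf{Step 1: cover and decompose.} By abundance and Lemma~\ref{lem:R-to-Q-linear}, $K_\Ff\sim_{\mathbb Q}0$. Let $r$ be minimal with $rK_\Ff\sim 0$, let $\pi:Y\to X$ be the index one cover, and put $\cG=\pi^{-1}\Ff$. By Lemma~\ref{lem:cyclic}, $K_\cG\sim 0$ and $\mu_r$ acts on the generator of $H^0(Y,\cO_Y(K_\cG))$ through a character of exact order $r$. Since $\pi$ is quasi-\'etale, $Y$ is klt and $\cG$ is canonical.

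Apply Theorem~\ref{thm:druel-product} to $(Y,\cG)$. This gives a quasi-\'etale cover $Y_0\times Z_0\to Y$ such that $\cG$ pulls back from a rank one foliation $\cH_0$ on $Y_0$ with no algebraic leaves through a general point. The dimension of $Y_0$ is $2$ or $3$: it is not $1$, since a rank one foliation on a curve is the whole tangent sheaf and has algebraic leaves. Because $\cH_0$ is rank one and transcendental with $K_{\cH_0}\equiv0$, I expect, via a Brunella/Pereira-type classification (cf.\ the abundance results of \cite{CS21,CS26}), that a further quasi-\'etale cover makes $\cH_0$ induced by a global vector field $v$.

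\textbf{Step 2: the group $A$.} Take $A$ to be the identity component of the Zariski closure of the flow of $v$ in $\Aut^0$; it is commutative. Transcendence of the leaves forces the orbit of $A$ to be open. For the total space I take $R=Y_0\times Z_0$, with $A$ the product of this group and the translation group of $Z_0$ (which should be an abelian variety after a cover, since $K_{Z_0}\sim0$ and $\dim Z_0\le1$).

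To make everything $\Gamma$-equivariant, I take the Galois closure $R\to X$ with group $\Gamma$, then apply Lemma~\ref{lem:finite-refinement-equivariant} to $R\to P$. The foliation is then given by a line $W\subset\Lie A$ with $W\otimes\cO_R\cong\cG_R$. By Lemma~\ref{lem:no-unipotent-direction} together with Lemma~\ref{lem:no-unipotent-after-cover}, $W\cap\Lie A^{\mathrm{unip}}=0$.

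\textbf{Step 3: bound the character.} Lemma~\ref{lem:rank-two-intrinsic-normalizer} requires $W$ to generate $A$, so I replace $A$ by the subgroup generated by $W$. Then every $\gamma\in\Gamma$ acts on $U\cong A$ as $x\mapsto L_\gamma(x)+a_\gamma$, and acts on $H^0(K_\cG)\cong W^\vee$ by $dL_\gamma|_W$.

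To apply Lemma~\ref{lem:lattice}, first show that $\Gamma$ preserves the kernel $K=\Lie A^{\mathrm{unip}}$. Then $dL_\gamma$ preserves the period lattice of the semi-abelian variety $A/A^{\mathrm{unip}}$, which has dimension $\le3$. Since $W$ injects into $\Lie(A/A^{\mathrm{unip}})$, the eigenvalue of $dL_\gamma$ on $W$ is an eigenvalue on the quotient, so its order is at most $30$.

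The character of $\Gamma$ on $H^0(K_{\cG_R})$ factors through $\Gamma\to\mu_r$ and restricts to the order-$r$ character of Lemma~\ref{lem:cyclic}, because $H^0$ pulls back injectively. Hence $r\le30$.

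\textbf{Main obstacle.} The hardest step is Step 2: showing that a purely transcendental rank one canonical foliation with $K\equiv0$ becomes, after a quasi-\'etale cover, induced by a vector field whose flow closure has a dense orbit. The case where the flow closure acts with lower-dimensional orbits must be excluded using that the closure of a general leaf is an orbit. If that is awkward, my first fallback is to handle a dense $A$-orbit in $Y_0$ times $Z_0$ separately.
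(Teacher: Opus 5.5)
\textbf{There is a genuine gap in Step 2.} The assertion that ``transcendence of the leaves forces the orbit of $A$ to be open'' is false, and everything after it depends on it. Lemma~\ref{lem:finite-refinement-equivariant} and Lemma~\ref{lem:rank-two-intrinsic-normalizer} both need a dense open orbit $U\cong A$. Without one, you also have no bound $\dim A/A^{\mathrm{unip}}\le 3$ to feed into Lemma~\ref{lem:lattice}.

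\emph{Counterexample.} Let $X=E\times B$, with $E$ an elliptic curve and $B$ a simple abelian surface, and let $\Ff=\cO_X\cdot v$ for some $0\neq v\in\Lie B$. This $\Ff$ is regular with $K_\Ff=0$, so it is canonical, of rank one, and of algebraic rank zero. Yet the Zariski closure of the flow of $v$ is $\{0\}\times B$, whose orbits are the surfaces $\{e\}\times B$. So the non-dense case cannot be ``excluded''; it is part of the proposition.

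\emph{Step 1 does no work.} In Theorem~\ref{thm:druel-product} one has $\rk u^{-1}\cG=\rk\cH_0+\dim Z_0$. Since $\cG$ has rank one and algebraic rank zero, $Z_0$ must be a point. Hence $\dim Y_0=3$, and there is no translation group of $Z_0$ to adjoin.

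\emph{The vector field needs no classification.} $\cG$ is reflexive of rank one, so on the index one cover $\cG\cong\cO_Y(-K_\cG)\cong\cO_Y$. The dual of the tautological section $\eta$ is then a generator $v\in H^0(Y,\cG)$ with $\delta_*v=\lambda^{-1}v$. Here $\delta$ generates $\Gal(Y/X)$ and $\lambda$ is the primitive $r$-th root of unity with $\delta^*\eta=\lambda\eta$.

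\textbf{What you are missing is that no open orbit is needed.}
\begin{enumerate}
\item Since $\delta_*v=\lambda^{-1}v$, $\delta$ normalizes $A:=\overline{\exp(\C v)}\subset\Aut^0(Y)$. Hence it preserves the characteristic subgroup $A^{\mathrm{unip}}$.
\item Put $S=A/A^{\mathrm{unip}}$. For general $y$, let $K\subset S$ be the connected image of $\Stab_A(y)^0\cdot A^{\mathrm{unip}}$. This is constant for $y$ in a dense open set, hence $\bar\delta$-stable.
\item Then $\overline S:=S/K$ is semi-abelian, with $\dim\overline S\le\dim A\cdot y\le 3$.
\item The image of $W=\C v$ in $\Lie\overline S$ is nonzero. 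Otherwise the element $w\in\Lie A$ giving $v$ decomposes as $w=z+u$, with $z\in\Lie\Stab_A(y)^0$ and $u\in\Lie A^{\mathrm{unip}}$. Here $u\neq0$ because $v(y)\neq0$. Since $A$ is commutative, the field of $z$ vanishes along $A\cdot y$. So the $\mathbb G_a$-orbit of $y$ in direction $u$ is an algebraic curve tangent to $\cG$ through a general point, contradicting algebraic rank zero.
\item Apply Lemma~\ref{lem:lattice} to the finite-order automorphism of $\overline S$ induced by $\delta$. It acts on this line by $\lambda^{-1}$, which has order $r$, so $r\le 30$.
\end{enumerate}
This argument works directly on $Y$. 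It needs no Galois closure, no equivariant compactification, and no generation hypothesis, because $\delta$ itself preserves $\C v$.
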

\begin{proof}
Let \(r\) be the smallest positive integer such that $rK_{\mathcal F}\sim 0.$ We only need to show that $r\leq 30$. 

Let $\pi: Y\rightarrow X$ be the index $1$ cover associated to $K_{\Ff}$. Then $\pi$ is finite quasi-\'etale and the Galois group $\Gal(Y/X)$ is naturally identified with $\mu_r$. Let $\delta$ be a generator of $\Gal(Y/X)$ and let $\Gg=\pi^{-1}\Ff$, then we have 
\begin{equation}
    0\sim K_{\Gg}=\pi^*K_{\Ff}
\end{equation}
and the degree $1$ summand $\mathcal{O}_X(-K_{\Ff})\subset\pi_*\mathcal{O}_Y$ gives a canonical trivializing section
$\eta\in H^0(Y,K_{\Gg})$. Then $\eta$ is an eigenvector satisfying that
$$\delta^*\eta=\lambda\eta$$
where $\lambda=e^{2\pi i/r}$. In particular, $\lambda$ has order $r$. Since $\rk\Gg=1$, by the definition of $K_{\Gg}$, we have
\begin{equation}
    \mathcal{O}_Y(K_{\Gg})=\Gg^{\vee}
\end{equation}
hence the dual of $\eta$ induces a generator
$$v\in H^0(Y,\Gg)\subset H^0(Y,TY)$$
and we have
$$\delta_*v=\lambda^{-1}\cdot v.$$
We now consider $v$ as a global vector field on $Y$. Since \(\Aut^0(Y)\) is the neutral component of \(\Aut_Y\), the Lie algebra of $\Aut^0(Y)$ and $\Aut_Y$ are the same. By \cite[Lemma 3.4]{MO67}, 
$$H^0(Y,TY)=\Lie\Aut^0(Y).$$
Let 
$$W:=\mathbb Cv \quad \text{and}\quad B:=\exp(\mathbb Cv)\subset\Aut^0(Y)(\mathbb C),$$
and let $A:=\overline{B}\subset\Aut^0(Y)$ the Zariski closure of $B$ in $\Aut^0(Y)$. Then $B$ is connected and commutative, so $A$ is connected and commutative. We have $W\subset\Lie A$.

Let $U\subset A$ be the maximal connected unipotent subgroup. Since $A$ is commutative, $U$ is characteristic in $A$, so $U$ is $\delta$-invariant. Let $S:=A/U.$
Then $S$ is a semi-abelian variety. We let 
$$p:\quad \Lie A\rightarrow\Lie S$$
be the quotient map. By Lemma \ref{lem:no-unipotent-direction}, $W\not\subset U$, hence $p(W)\not=0$.

Next we define a subgroup $K\subset S$. Let $y\in Y$ be a general closed point and let 
$$H_y:=\Stab_A(y)^0$$
be the identity component of the stabilizer of $y$. Let $K_y\subset S$ be the connected image of $H_yU$ in $S=A/U$. Since connected algebraic
subgroups of a semi-abelian variety form a countable set, and the loci
where \(K_y\) is equal to a fixed subgroup are constructible, there exists
a dense open subset $Y_{\mathrm{gen}}\subset Y$ and a connected algebraic subgroup $K\subset S$ such that $K_y=K$ for every \(y\in Y_{\mathrm{gen}}\). We have that $K$ is $\delta$-invariant.

Since $U$ is $\delta$-invariant, there exists an automorphism $\bar\delta: S\rightarrow S$ induced by $\delta$. For any $y\in Y$, we have
$$H_{\delta(y)}=\delta H_y\delta^{-1}$$
hence
$$K_{\delta(y)}=\overline\delta(K_y).$$
Since we work over $\C$, there exists $y\in Y_{\mathrm{gen}}\cap\delta^{-1}(Y_{\mathrm{gen}})$, and we have
$$K=\overline\delta(K).$$
We define
\[
\overline S:=S/K
\]
and let
\[
q:\Lie A\to \Lie\overline S
\]
be the composition
\[
\Lie A\xrightarrow{p}\Lie S\to \Lie(S/K).
\]
of the quotient map and $p$. 
\begin{claim}\label{claim:qwnot0}
    $q(W)\not=0$.
\end{claim}
\begin{proof}
Suppose that $q(W)=0$. Then we may pick $y\in Y_{\mathrm{gen}}$ such that $Y$ is smooth near $y$, $\Gg$ is regular near $y$, and $v(y)\not=0$. Let $0\not=w\in W$ be the element corresponding to $v$. Then $p(w)\in\Lie K$. Since $K$ is the connected image of $H_yU$ in $S=A/U$, there exists $z\in\Lie H_y$ such that
$p(z)=p(w)$. Let $u:=w-z$, then $p(u)=0$, hence $u\in\Lie U$. 
Since $v(y)\not=0$, the infinitesimal action of $w$ at $y$ does not vanish. Since $z\in\Lie H_y$, the infinitesimal action of $z$ at $y$ vanishes. Thus $u=w-z\not=0$, so the line $\mathbb Cu\subset\Lie U$ integrates to a $1$-dimensional algebraic subgroup
$$U_y\cong\mathbb G_a$$
on $U$. Let $C_y:=\overline{U_y\cdot y}\subset Y$. Then $C_y$ is an algebraic curve. Since $A$ is commutative, $H_y$ commutes with $U_y$. Since $z\in\Lie H_y$, the vector field induced by $z$ vanishes along $U_y\cdot y$. Thus the vector field induced by $w$ and $u$ are equal. The vector field induced by $w$ is the vector field induced by $v$. Thus $C_y$ is tangent to $\Gg$. Since $y$ is general, $\Gg$ is algebraically integrable, so $\Ff$ is algebraically integrable. A contradiction.
\end{proof}

\noindent\textit{Proof of Proposition \ref{prop:rank-one} continued.} Let $L:\overline S\to\overline S$ be the automorphism induced by $\bar\delta$. By Claim \ref{claim:qwnot0}, $0\not=q(W)\subset\Lie\overline S$. Then $q(W)$ is $L$-stable and the scalar of the action of $L$ is $\lambda^{-1}$. 

Pick a general closed point $y\in Y_{\mathrm{gen}}$. We have
\begin{equation}
   \dim\overline{S}=\dim (S/K)=\dim (A/H_yU)\leq\dim (A/H_y)=\dim (A\cdot y)\leq\dim Y=3. 
\end{equation}
By Lemma~\ref{lem:lattice}, the order of the $\lambda^{-1}$ is $\leq 30$. Thus $r\leq 30$ and the proposition follows.
\end{proof}

\section{Rank two: not purely transcendental case}\label{sec:case-d}

Throughout this section, $X$ is a $\mathbb Q$-factorial klt projective threefold over $\mathbb C$, and $\cF\subset TX$ is a rank two foliation with canonical singularities and $K_\cF\equiv 0$. We treat the case where the algebraic part of $\cF$ has rank one.

\begin{prop}\label{prop:rk2-alg1}
Let $X$ be a $\mathbb Q$-factorial klt projective threefold and let $\cF$ be a rank two foliation on $X$ with canonical singularities. Assume that $K_\cF\equiv 0$ and that the algebraic part of $\cF$ has rank one. Then there exists an integer $r$ with $1\le r\le 30$ such that
\[
    rK_{\cF}\sim 0.
\]
Equivalently, if $r$ is the exact torsion index of $K_\cF$, then $r\le 30$.
\end{prop}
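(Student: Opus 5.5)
We follow the strategy of Proposition~\ref{prop:rank-one}, using the structure results Theorem~\ref{thm:druel-product} and Theorem~\ref{thm:DO22-structure} to reach a Lie-theoretic situation. Let $r$ be the exact torsion index of $K_\cF$; it exists by abundance and Lemma~\ref{lem:R-to-Q-linear}. Let $\pi:Y\to X$ be the index one cover, $\cG:=\pi^{-1}\cF$ and $\delta$ a generator of $\Gal(Y/X)\cong\mu_r$. By Lemma~\ref{lem:cyclic}, the trivializing section $\eta$ of $\cO_Y(K_\cG)$ satisfies $\delta^*\eta=\lambda\eta$ with $\lambda$ of exact order $r$. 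Since $\pi$ is quasi-\'etale, $Y$ is klt, $\cG$ is canonical with $K_\cG\sim 0$, and $\cG$ still has algebraic rank one and is not algebraically integrable. The goal is to realize $\lambda^{\pm1}$ as a determinant character $\chi_{W}(T)$ for some finite-order automorphism $T$ of the Lie algebra of a semi-abelian variety of dimension $\le 3$ preserving its period lattice. Lemma~\ref{lem:lattice} then gives $r\le 30$.

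\textbf{Step 1 (structure).} Apply Theorem~\ref{thm:druel-product} to $(Y,\cG)$. This gives a quasi-\'etale cover $u:Y_0\times Z_0\to Y$ with $u^{-1}\cG=\mathrm{pr}_1^{-1}\cH_0$, where $\cH_0$ is purely transcendental. Since $\cG$ has algebraic rank one, the algebraic leaves have dimension one, which forces $\dim Z_0=1$, $\dim Y_0=2$, and $\cH_0$ a rank one purely transcendental foliation on a surface with $K_{\cH_0}\equiv0$. So $\cG$ has codimension one, and Theorem~\ref{thm:DO22-structure} applies to $(Y,\cG)$. Case (1) makes $\cG$ algebraically integrable, which is excluded. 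Hence we are in case (2): there is a quasi-\'etale cover $W_1\times W_2\to Y$ with $W_2$ an equivariant compactification of a connected commutative group $A$, and $\cG$ is pulled back from a translation-invariant codimension one foliation on $W_2$. Algebraic rank one together with $K_{W_1}\sim0$ should force $\dim W_1\le1$ and $\dim W_2\ge2$.

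\textbf{Step 2 (equivariance).} The subtle point is making $\delta$ act compatibly. I would replace $W_1\times W_2\to Y$ by the Galois closure of its composite with $\pi$, which is again quasi-\'etale over $X$. Call the total space $R$ and the finite group $\Gamma=\Gal(R/X)$. Using Lemma~\ref{lem:finite-refinement-equivariant}, $R$ still carries an action of a connected commutative group $A_R$ with a dense orbit, at least after splitting off the $W_1$ factor. If $W_1$ is an elliptic curve, I would absorb it into the group, so that $R$ is an equivariant compactification of $A_R$ of dimension $3$. The pulled-back foliation $\cG_R$ is then given by a $2$-dimensional subspace $W\subset\Lie A_R$ with $W\otimes\cO_R\cong\cG_R$. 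I would check that $W$ algebraically generates $A_R$. If it does not, then as in Lemma~\ref{lem:rank-zero-generation} the leaves lie in orbits of a proper subgroup, and I would instead pass to the quotient by the subgroup that generates the algebraic part. The transcendental direction is then treated exactly as in Claim~\ref{claim:qwnot0}, using Lemmas~\ref{lem:no-unipotent-direction} and~\ref{lem:no-unipotent-after-cover}.

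\textbf{Step 3 (character).} Apply Lemma~\ref{lem:rank-two-intrinsic-normalizer} to $\Gamma$ acting on $(R,\cG_R)$. Each lift $\gamma$ of $\delta$ acts on $H^0(K_{\cG_R})\cong\det(W^\vee)$ through $dL_\gamma|_W$. The pullback of $\eta$ is an eigenvector with eigenvalue $\lambda$, so $\lambda=\chi_W(dL_\gamma)$. Next, pass to $S=A_R/A_R^{\mathrm{unip}}$. By the no-unipotent-direction lemmas, $W$ maps injectively into $\Lie S$. Alternatively, if $W$ meets the unipotent part along the algebraic direction, split the determinant as the product of the character on that line and the character on the transcendental quotient line, and bound each separately. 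Since $dL_\gamma$ preserves the period lattice and $\dim S\le 3$, Lemma~\ref{lem:lattice} gives order at most $30$.

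\textbf{Main obstacle.} Two things are delicate. The first is Step 2: ensuring that the whole Galois group, including lifts of $\delta$, acts on a single equivariant compactification with $W$ algebraically generating. The second is controlling the algebraic (possibly unipotent or $\mathbb G_m$) direction of $W$, so that the determinant character is genuinely computed on a semi-abelian quotient of dimension $\le3$. I would first try to quotient by the connected subgroup whose orbits are the algebraic leaves, reducing to a rank one character on $\overline S$ as in Section~\ref{sec:rank-one}, and then handle the algebraic line separately.
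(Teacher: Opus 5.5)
There is a genuine gap. It sits in the subcase of Theorem~\ref{thm:DO22-structure}(2) that your Step~1 leaves open: $\dim W_1=0$, so $W_2$ is a threefold compactifying a three-dimensional group $A$, and the translation-invariant foliation given by $W\subset\Lie A$ has algebraic rank one. Its algebraic part is then $\Lie B\subset W$ for a one-dimensional subgroup $B\subset A$, and nothing in your argument rules out $B\cong\mathbb G_a$. In that case Lemma~\ref{lem:no-unipotent-direction}, which needs algebraic rank zero, does not give $W\hookrightarrow\Lie(A_R/A_R^{\mathrm{unip}})$. Your fallback also fails. You propose to split $\det(W^\vee)$ as the character on $W\cap\Lie A_R^{\mathrm{unip}}$ times the character on the quotient line. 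The first factor is the scalar by which a finite-order automorphism acts on $\Lie\mathbb G_a$. Since $\Aut_{\mathrm{gp}}(\mathbb G_a)=\C^*$ contains roots of unity of every order, no period lattice constrains it. So as written, $r$ is not bounded.

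The missing idea is that the algebraic direction is an elliptic curve. You already had this from Theorem~\ref{thm:druel-product}, since $K_{Z_0}\sim0$ forces $Z_0$ to be elliptic. You then discarded it by applying Theorem~\ref{thm:DO22-structure} to $(Y,\cG)$.

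The paper instead keeps Druel's product $S\times E\to X$ and applies Theorem~\ref{thm:DO22-structure} only to the rank one, purely transcendental surface foliation $(S,\cH)$. This forces $\dim W_1=0$, a surface $Z$ compactifying a two-dimensional group $A_Z$, and $W_Z\cap\Lie A_Z^{\mathrm{unip}}=0$. The equivariant model is $P=Z\times E$, with group $\Aprod=A_Z\times E$ and $\Wprod=W_Z\oplus\Lie E$; this subspace meets $\Lie\Aprod^{\mathrm{unip}}$ trivially and algebraically generates $\Aprod$. Only then is the Galois closure $R$ of $Y$ and $P$ over $X$ formed, and Lemma~\ref{lem:finite-refinement-equivariant} applied to $h\colon R\to P$. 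The elliptic factor must be absorbed into the group before taking the Galois closure, since $R$ need not be a product. All of $W_R$ then embeds into the Lie algebra of a semi-abelian variety of dimension at most three, and Lemma~\ref{lem:lattice} gives $r\le 30$ at once, with no need to split the determinant.

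If you prefer your route, you must add an argument that $B$ is an elliptic curve in the $\dim W_1=0$ subcase. One way is to take a common quasi-\'etale cover of $W_2$ and $Y_0\times Z_0$ and look at a general algebraic leaf. It dominates the elliptic curve $\{y\}\times Z_0$. Over the open orbit, where the cover is \'etale by purity, it is also a finite \'etale cover of a $\mathbb G_a$- or $\mathbb G_m$-orbit, hence rational, which is a contradiction.
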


\subsection{Generated product model}

\begin{lem}[Source extraction]\label{lem:case-d-source}
Under the hypotheses of Proposition~\ref{prop:rk2-alg1}, there exist:
\begin{enumerate}
    \item a connected normal projective klt surface $S$;
    \item a smooth connected projective genus one curve $E$;
    \item a finite surjective quasi-\'etale morphism
    \[
        q_0:S\times E\to X;
    \]
    \item a saturated rank one foliation $\cH$ on $S$ such that $q_0^{-1}\cF$ is the pullback of $\cH$ by the projection $S\times E\to S$;
    \item a connected normal projective surface $Z$, a finite surjective quasi-\'etale morphism $g:Z\to S$, a saturated rank one inverse-image foliation $\cH_Z$ on $Z$, a connected commutative algebraic group $A_Z$ of dimension two, a dense open orbit $U_Z\subset Z$, a point $z_0\in U_Z$, and a one-dimensional subspace $W_Z\subset \Lie A_Z$.
\end{enumerate}
such that the orbit map
\[
    A_Z\longrightarrow U_Z,
    \qquad
    a\longmapsto a\cdot z_0,
\]
is an isomorphism, and $\cH_Z|_{U_Z}$ is the translation-invariant foliation induced by $W_Z$.

Moreover, the composite
\[
    Q:=q_0\circ(g\times \operatorname{id}_E):Z\times E\to X
\]
is finite surjective quasi-\'etale, and $Q^{-1}\cF$ is the pullback of $\cH_Z$ by the projection $Z\times E\to Z$. The foliations $\cH$ and $\cH_Z$ are canonical, satisfy $K_\cH\equiv0$ and $K_{\cH_Z}\equiv0$, and have algebraic rank zero.
\end{lem}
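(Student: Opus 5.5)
We will obtain the product model in two stages, first using Theorem~\ref{thm:druel-product} (Druel's decomposition) and then Theorem~\ref{thm:DO22-structure} (Druel--Ou) on the transcendental factor.

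\textbf{Step 1: splitting off the algebraic part.} Apply Theorem~\ref{thm:druel-product} to $V=X$ and $\cG=\cF$; this is allowed because $X$ is klt, $\cF$ is canonical and $K_\cF\equiv 0$. We get a quasi-\'etale cover $u\colon Y_0\times Z_0\to X$ with $u^{-1}\cF=\mathrm{pr}_1^{-1}\cH_0$. Here $\cH_0$ has no algebraic leaves through a general point, so $\rk_{\alg}\cH_0=0$. Since $u^{-1}\cF$ contains the relative tangent sheaf of $\mathrm{pr}_1$, algebraic rank is preserved under quasi-\'etale pullback, and $\rk_{\alg}\cF=1$, we get $\dim Z_0=1$ and $\dim Y_0=2$ with $\rk\cH_0=1$. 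A curve with $K_{Z_0}\sim 0$ is a genus one curve $E$. Set $S:=Y_0$, $\cH:=\cH_0$, $q_0:=u$. Then $\cH$ is canonical, $K_\cH\equiv 0$, and $\cH$ has algebraic rank zero.

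\textbf{Step 2: structure of $(S,\cH)$.} On the klt surface $S$, $\cH$ is a codimension one foliation that is canonical with $K_\cH\equiv0$, so Theorem~\ref{thm:DO22-structure} applies. Alternative (1) would give a foliation induced by a fibration onto a curve, which is algebraically integrable; that is excluded. Hence alternative (2) holds: there is a quasi-\'etale cover $W_1\times W_2\to S$ with the stated properties. If $\dim W_2=1$, then $\cH_2=0$, so the pulled-back foliation would be the one induced by the projection to $W_2$, which is again algebraic. Thus $\dim W_2=2$ and $W_1$ is a point. Take $Z:=W_2$ and let $g\colon Z\to S$ be this cover. Then $Z$ is an equivariant compactification of a two-dimensional connected commutative algebraic group $A_Z$, with dense orbit $U_Z$; we choose $z_0\in U_Z$. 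The foliation $g^{-1}\cH=\cH_Z\cong\cO_Z$ is induced by a one-dimensional Lie subgroup, i.e.\ by a line $W_Z\subset\Lie A_Z$. On $U_Z\cong A_Z$ it is therefore the translation-invariant foliation. The small point that $\cH_Z$ is saturated and equals $g^{-1}\cH$ comes directly from the statement of Theorem~\ref{thm:DO22-structure}.

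\textbf{Step 3: composite and properties.} Compositions and products of finite quasi-\'etale maps are quasi-\'etale, because $g\times\mathrm{id}_E$ is \'etale in codimension one. Inverse images are functorial, so $Q^{-1}\cF=\mathrm{pr}_Z^{-1}\cH_Z$. Canonicity, numerical triviality of $K$, and algebraic rank zero are all invariant under quasi-\'etale pullback: $K_{\cH_Z}=g^*K_\cH$, and discrepancies are preserved by \cite{CS21}-type quasi-\'etale invariance. This gives all the remaining conclusions.

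\textbf{Main obstacle.} The delicate point is Step 1, namely the dimension count $\dim Z_0=1$. It requires checking that the algebraic rank of $u^{-1}\cF$ equals $\dim Z_0+\rk_{\alg}\cH_0$. In particular one must argue that a purely transcendental $\cH_0$ of rank at most two on $Y_0$ contributes nothing algebraic. One also has to rule out $\dim Z_0=0$, in which case $\cF$ would have algebraic rank zero, contrary to the hypothesis. I would handle this via the characterization of the algebraic part as the maximal algebraically integrable subfoliation.
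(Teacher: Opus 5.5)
Your proposal is correct and follows essentially the same route as the paper. The paper also applies Druel's decomposition and gets $\dim Z_0=1$ from $\rk_{\alg}(u^{-1}\cF)=\dim Z_0+\rk_{\alg}\cH_0$, which it simply asserts and which your maximal algebraically integrable subfoliation argument would justify; it then applies Druel--Ou to $(S,\cH)$, and your direct exclusion of $\dim W_2=1$ (the pullback of the zero foliation is algebraically integrable) is just a concrete instance of the paper's formula $\rk_{\alg}=\dim W_1+\rk_{\alg}\cH_2$.
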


\begin{proof}
Apply Theorem~\ref{thm:druel-product} to $(X,\cF)$. We obtain normal projective klt varieties $Y_0,Z_0$, a finite quasi-\'etale cover
\[
    u:Y_0\times Z_0\to X,
\]
and a foliation $\cH_0$ on $Y_0$ such that $u^{-1}\cF$ is the pullback of $\cH_0$ by the projection to $Y_0$. Moreover, $\cH_0$ has algebraic rank zero, $K_{Z_0}\sim0$, $\cH_0$ is canonical, and $K_{\cH_0}\equiv0$.

Finite covers preserve algebraic rank. Since $u^{-1}\cF$ is the product foliation pulled back from $\cH_0$, its algebraic rank is
\[
    \rk_{\alg}(\cH_0)+\dim Z_0=\dim Z_0.
\]
By hypothesis this rank is one. Hence $\dim Z_0=1$ and $\dim Y_0=2$. The normal projective curve $Z_0$ satisfies $K_{Z_0}\sim0$, so it is a smooth connected projective curve of genus one. Set
\[
    S:=Y_0,
    \qquad
    E:=Z_0,
    \qquad
    q_0:=u,
    \qquad
    \cH:=\cH_0.
\]
Then $\cH$ is canonical, $K_\cH\equiv0$, and $\cH$ has algebraic rank zero.

Apply Theorem~\ref{thm:DO22-structure} to the rank one codimension one foliation $(S,\cH)$. Alternative~(1) would produce algebraic curves tangent to the pulled-back foliation through general points, contradicting algebraic rank zero. Thus alternative~(2) holds: there are normal projective klt varieties $W_1,W_2$, a finite quasi-\'etale cover $W_1\times W_2\to S$, and a foliation $\cH_2$ on $W_2$ such that the inverse-image foliation is the pullback of $\cH_2$ by the projection to $W_2$.

For a product foliation pulled back from $W_2$, the algebraic rank equals $\dim W_1+\rk_{\alg}\cH_2$. Since the pulled-back foliation has algebraic rank zero, we get $\dim W_1=0$ and $\rk_{\alg}\cH_2=0$. Since $\dim S=2$, it follows that $\dim W_2=2$.

Set $Z:=W_2$, let $g:Z\to S$ be the induced finite quasi-\'etale morphism, and set $\cH_Z:=\cH_2$. By Theorem~\ref{thm:DO22-structure}(2), $Z$ is an equivariant compactification of a connected commutative algebraic group $A_Z$ of dimension two, with dense open orbit $U_Z$, and $\cH_Z|_{U_Z}$ is induced by a one-dimensional subspace $W_Z\subset\Lie A_Z$. The numerical triviality and canonical singularities of $\cH_Z$ follow from quasi-\'etale pullback.

Finally, $g\times\operatorname{id}_E$ and hence
\[
    Q=q_0\circ(g\times\operatorname{id}_E):Z\times E\to X
\]
are finite surjective quasi-\'etale. On the codimension one locus where the finite maps are \'etale and the foliations are locally free, inverse-image foliations commute with composition, giving
\[
    Q^{-1}\cF=(p_Z)^{-1}\cH_Z,
\]
where $p_Z:Z\times E\to Z$ is the first projection. Since both sides are saturated reflexive subsheaves of $T(Z\times E)$, the equality extends globally.
\end{proof}

We now fix the notation supplied by Lemma~\ref{lem:case-d-source}. Choose an identity element on $E$, so that $E$ is an elliptic curve. Set
\[
    P:=Z\times E,
    \qquad
    \Aprod:=A_Z\times E,
    \qquad
    \Uprod:=U_Z\times E,
\]
and
\[
    \Wprod:=W_Z\oplus\Lie E\subset\Lie\Aprod.
\]
The open subset $\Uprod\subset P$ is a dense open orbit of $\Aprod$, and the pulled-back foliation $\cG_P:=Q^{-1}\cF$ is induced on $\Uprod$ by $\Wprod$.

By Lemma~\ref{lem:no-unipotent-direction}, $W_Z\cap\Lie A_Z^{\mathrm{unip}}=0$. Since an elliptic curve has no non-trivial connected unipotent subgroup,
\[
    \Wprod\cap\Lie\Aprod^{\mathrm{unip}}=0.
\]
Moreover, Lemma~\ref{lem:rank-zero-generation} shows that $W_Z$ algebraically generates $A_Z$. It follows that $\Wprod$ algebraically generates $\Aprod$: if a connected subgroup $B\subset\Aprod$ satisfies $\Wprod\subset\Lie B$, then its projections to $A_Z$ and to $E$ are both surjective, and the inclusion $\Lie E\subset\Lie B$ forces $\dim B=\dim\Aprod$.

\subsection{Proof of Proposition~\ref{prop:rk2-alg1}}

\begin{proof}[Proof of Proposition~\ref{prop:rk2-alg1}]
By the abundance theorem and Lemma~\ref{lem:R-to-Q-linear}, $K_\cF\sim_{\mathbb Q}0$. Let $r$ be the minimal positive integer such that $rK_{\Ff}\sim 0$. We shall prove that $r\le30$.

Let $\pi:Y\to X$ be the connected cyclic index one cover of $K_\cF$, and set $\cG_Y:=\pi^{-1}\cF$. By Lemma~\ref{lem:cyclic}, $\cO_Y(K_{\cG_Y})\cong\cO_Y$, and its tautological generator $\eta_Y$ has primitive deck character of order $r$.

Let $Q:P=Z\times E\to X$ be the quasi-\'etale cover constructed above. Let $R$ be the normalization of $X$ in the Galois closure of the compositum of $\mathbb C(Y)$ and $\mathbb C(P)$ over $\mathbb C(X)$. Denote the induced maps by
\[
    \rho:R\to X,
    \qquad
    p:R\to Y,
    \qquad
    h:R\to P,
\]
and set $\Gamma:=\Gal(R/X)$. Since $Y\to X$ and $P\to X$ are quasi-\'etale, the covers $R\to X$, $p$, and $h$ are finite quasi-\'etale: at codimension one points the corresponding extensions of discrete valuation rings are unramified, and this property is preserved under composita and Galois closures.

Apply Lemma~\ref{lem:finite-refinement-equivariant} to $h:R\to P$. We obtain a connected commutative algebraic group $A_R$ with dense open orbit $U_R=h^{-1}(\Uprod)$, a finite \'etale homomorphism
\[
    \phi_R:A_R\to\Aprod,
\]
and an extension of the translation action of $A_R$ to $R$ for which $h$ is equivariant. Set
\[
    W_R:=(d\phi_R)^{-1}(\Wprod)\subset\Lie A_R.
\]
Since $\phi_R$ is finite \'etale, $d\phi_R$ is an isomorphism and $W_R$ is two-dimensional. The inverse-image foliation $\cG_R:=\rho^{-1}\cF$ is induced on $U_R$ by $W_R$.

Choose a basis $w_1,w_2$ of $W_R$, and let $\widetilde w_1,\widetilde w_2$ be the corresponding global vector fields on $R$ induced by the $A_R$-action. On the codimension one locus where $h$ is \'etale and the foliations are locally free, these vector fields map to the translation-invariant vector fields on $\Aprod$ associated to $d\phi_R(w_i)\in\Wprod$, hence lie in $\cG_R$. By reflexivity,
\[
    \widetilde w_i\in H^0(R,\cG_R).
\]
The wedge
\[
    \sigma:=\widetilde w_1\wedge\widetilde w_2\in H^0(R,\det\cG_R)
\]
is nonzero at the generic point. It defines an effective Weil divisor $D$ with
\[
    \cO_R(D)\cong\det\cG_R\cong\cO_R(-K_{\cG_R}).
\]
Since $K_{\cG_R}\equiv0$, we have $D\equiv0$. An effective numerically trivial $\mathbb Q$-Cartier divisor is zero, so $D=0$. Therefore
\[
    W_R\otimes\cO_R\cong\cG_R,
    \qquad
    H^0(R,\cO_R(K_{\cG_R}))\cong\det(W_R^\vee).
\]

By Lemma~\ref{lem:no-unipotent-after-cover}, the equality $\Wprod\cap\Lie\Aprod^{\mathrm{unip}}=0$ implies
\[
    W_R\cap\Lie A_R^{\mathrm{unip}}=0.
\]
Thus $W_R$ injects into the Lie algebra of the semi-abelian quotient
\[
    S_R:=A_R/A_R^{\mathrm{unip}},
\]
which has dimension at most three. By Lemma~\ref{lem:generation-after-cover}, $W_R$ algebraically generates $A_R$.

The group $\Gamma$ preserves $\cG_R$. Lemma~\ref{lem:rank-two-intrinsic-normalizer} shows that every $\gamma\in\Gamma$ normalizes $A_R$ and acts on the open orbit by an affine automorphism whose finite-order linear part preserves $W_R$. Since $A_R^{\mathrm{unip}}$ is characteristic, this linear part descends to a finite-order automorphism of $S_R$ preserving the image of $W_R$. Lemma~\ref{lem:lattice} then implies that the scalar by which $\gamma$ acts on
\[
    H^0(R,\cO_R(K_{\cG_R}))\cong\det(W_R^\vee)
\]
has order at most $30$.

Choose $\gamma\in\Gamma$ mapping to a generator of $\Gal(Y/X)$. The pullback
\[
    \eta_R:=p^{[*]}\eta_Y\in H^0(R,\cO_R(K_{\cG_R}))
\]
is nonzero and hence spans this one-dimensional vector space. The action of $\gamma$ on $\eta_R$ has exact order $r$, because $\eta_Y$ has primitive deck character of order $r$. Therefore $r\le30$.
\end{proof}

\section{Rank two: purely transcendental case}\label{sec:case-e}

Throughout this section, $X$ is a $\mathbb Q$-factorial klt projective threefold over $\mathbb C$, and $\cF\subset TX$ is a rank two foliation with canonical singularities and $K_\cF\equiv0$. We treat the case where the algebraic part of $\cF$ has rank zero.

\begin{prop}\label{prop:rk2-alg0}
Let $X$ be a $\mathbb Q$-factorial klt projective threefold over $\mathbb C$, and let $\cF\subset TX$ be a rank two foliation with canonical singularities. Assume that $K_\cF\equiv0$ and that the algebraic part of $\cF$ has rank zero. Then there exists an integer $r$ with $1\le r\le30$ such that
\[
    rK_\cF\sim0.
\]
Equivalently, if $r$ is the exact torsion index of $K_\cF$, then $r\le30$.
\end{prop}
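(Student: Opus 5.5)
The plan is to mirror the proof of Proposition~\ref{prop:rk2-alg1}, replacing the product source by a direct application of Theorem~\ref{thm:DO22-structure}. Since $\rk\cF=2$ and $\dim X=3$, $\cF$ is a codimension one foliation on the klt threefold $X$ with canonical singularities and $K_\cF\equiv 0$. Apply Theorem~\ref{thm:DO22-structure} to $(X,\cF)$. Alternative~(1) produces, after a quasi-\'etale cover, a foliation induced by the projection $W'\times C\to C$. Its leaves are the algebraic surfaces $W'\times\{c\}$, which contradicts $\rk_{\alg}\cF=0$, because finite covers preserve algebraic rank. So alternative~(2) holds, with a cover $W_1\times W_2\to X$ and the foliation pulled back from $\cH_2$ on $W_2$. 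The algebraic rank of the pullback is at least $\dim W_1$, so $\dim W_1=0$. Hence we obtain a finite quasi-\'etale cover $Q:P\to X$, where $P$ is an equivariant compactification of a connected commutative algebraic group $A$ of dimension three with dense open orbit $U\cong A$. Moreover, $Q^{-1}\cF$ is induced on $U$ by a two-dimensional subspace $W\subset\Lie A$. By Lemma~\ref{lem:rank-zero-generation} with $r=2$, $W$ algebraically generates $A$. By Lemma~\ref{lem:no-unipotent-direction}, $W\cap\Lie A^{\mathrm{unip}}=0$.

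Next, let $r$ be the exact index of $K_\cF$, which is finite by abundance and Lemma~\ref{lem:R-to-Q-linear}. Let $\pi:Y\to X$ be the index one cover with tautological generator $\eta_Y$ as in Lemma~\ref{lem:cyclic}. Let $R$ be the normalization of $X$ in the Galois closure of the compositum of $\C(Y)$ and $\C(P)$, with maps $\rho,p,h$ and group $\Gamma=\Gal(R/X)$, all finite quasi-\'etale. Lemma~\ref{lem:finite-refinement-equivariant} applied to $h:R\to P$ gives $A_R$ acting on $R$ with a finite \'etale homomorphism $\phi_R:A_R\to A$. Set $W_R:=(d\phi_R)^{-1}(W)$. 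Lemmas~\ref{lem:generation-after-cover} and~\ref{lem:no-unipotent-after-cover} show that $W_R$ algebraically generates $A_R$ and meets $\Lie A_R^{\mathrm{unip}}$ trivially.

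Exactly as before, a basis of $W_R$ gives global sections of $\cG_R:=\rho^{-1}\cF$, by reflexivity. Their wedge is a section of $\cO_R(-K_{\cG_R})$ whose divisor is effective and numerically trivial, hence zero. Therefore $W_R\otimes\cO_R\cong\cG_R$ and $H^0(R,\cO_R(K_{\cG_R}))\cong\det(W_R^\vee)$. Lemma~\ref{lem:rank-two-intrinsic-normalizer} then shows that each $\gamma\in\Gamma$ acts on $\det(W_R^\vee)$ through the determinant-dual of $dL_\gamma|_{W_R}$, where $L_\gamma$ is a finite-order group automorphism of $A_R$. Since $A_R^{\mathrm{unip}}$ is characteristic, $L_\gamma$ descends to the semi-abelian quotient $S_R=A_R/A_R^{\mathrm{unip}}$, which has dimension at most $3$. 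Its differential preserves the period lattice, and $W_R$ maps isomorphically onto a stable subspace of $\Lie S_R$. Because $W_R\cap\Lie A_R^{\mathrm{unip}}=0$, the determinant character of $W_R$ equals that of its image. Lemma~\ref{lem:lattice} then bounds the order of this scalar by $30$. Finally, choose $\gamma$ lifting a generator of $\Gal(Y/X)$. It acts on $p^{[*]}\eta_Y$, which spans the line, with exact order $r$, so $r\le 30$.

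The main obstacle I expect is the first step: excluding alternative~(1) and forcing $\dim W_1=0$ rigorously. This requires care that algebraic rank is invariant under quasi-\'etale covers and is additive for product foliations. I would argue via tangent algebraic subvarieties through general points, as in Lemma~\ref{lem:case-d-source}. A secondary subtlety is checking that the descent to $S_R$ does not change the determinant character. For this I would use the isomorphism $W_R\to p(W_R)$, which is equivariant for $dL_\gamma$ and its descent.
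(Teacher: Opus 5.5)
Your proposal is correct and follows essentially the same route as the paper. The paper packages your opening paragraph as a separate source-extraction step (Lemma~\ref{lem:case-e-source}, applying Theorem~\ref{thm:DO22-structure}, excluding alternative~(1) and forcing $\dim W_1=0$ by algebraic rank zero), and then runs your argument: the Galois closure $R$, Lemma~\ref{lem:finite-refinement-equivariant}, the wedge trivialization $W_R\otimes\cO_R\cong\cG_R$, Lemma~\ref{lem:rank-two-intrinsic-normalizer}, descent to $S_R$, and Lemma~\ref{lem:lattice}, with the two subtleties you flag handled at the same level of detail as in your sketch.
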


\begin{lem}[Source extraction in algebraic rank zero]\label{lem:case-e-source}
Under the hypotheses of Proposition~\ref{prop:rk2-alg0}, there exist:
\begin{enumerate}
    \item a connected normal projective klt threefold $Z$;
    \item a finite surjective quasi-\'etale morphism $g:Z\to X$;
    \item a connected commutative algebraic group $A_Z$ of dimension three acting on $Z$;
    \item a dense open orbit $U_Z\subset Z$;
    \item a two-dimensional subspace $W_Z\subset \Lie A_Z$;
\end{enumerate}
such that the inverse-image foliation
\[
    \cH_Z:=g^{-1}\cF
\]
is induced on $U_Z$ by $W_Z$ and has algebraic rank zero. Moreover, after choosing a point $z_0\in U_Z$, the orbit map
\[
    A_Z\longrightarrow U_Z,
    \qquad
    a\longmapsto a\cdot z_0,
\]
is an isomorphism, and
\[
    \cH_Z\cong\cO_Z^2.
\]
\end{lem}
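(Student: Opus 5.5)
We mirror the proof of Lemma~\ref{lem:case-d-source}, applying the classification results in the order Theorem~\ref{thm:druel-product}, then Theorem~\ref{thm:DO22-structure}. The conclusion is then upgraded to the global triviality $\cH_Z\cong\cO_Z^2$.

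\textbf{Step 1 (Druel's decomposition is trivial).} First apply Theorem~\ref{thm:druel-product} to $(X,\cF)$. This gives a quasi-\'etale cover $u:Y_0\times Z_0\to X$ with $u^{-1}\cF$ pulled back from $\cH_0$ on $Y_0$. The algebraic rank of $u^{-1}\cF$ equals $\dim Z_0+\rk_{\alg}\cH_0$. Algebraic rank is preserved by finite covers, so it is zero, and hence $\dim Z_0=0$. Thus nothing is gained here. The useful content is that $X$ is klt, $\cF$ is canonical with $K_\cF\equiv0$, and $\cF$ has codimension one, since $\rk\cF=2$ and $\dim X=3$.

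\textbf{Step 2 (apply Theorem~\ref{thm:DO22-structure}).} Apply Theorem~\ref{thm:DO22-structure} to $(X,\cF)$. Alternative~(1) is impossible: the pulled-back foliation contains the fibres $W'\times\{c\}$, which are positive-dimensional algebraic leaves through general points, contradicting algebraic rank zero. So alternative~(2) holds, giving $W_1\times W_2\to X$ with the foliation pulled back from $\cH_2$ on $W_2$. The algebraic rank of this foliation is $\dim W_1+\rk_{\alg}\cH_2=0$, which forces $\dim W_1=0$ and $\dim W_2=3$. Set $Z:=W_2$, let $g$ be the resulting cover, and set $\cH_Z:=\cH_2=g^{-1}\cF$; the identification with $g^{-1}\cF$ holds by reflexivity, exactly as at the end of Lemma~\ref{lem:case-d-source}. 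Then $Z$ is an equivariant compactification of a three-dimensional connected commutative group $A_Z$ with dense orbit $U_Z$, and the orbit map at $z_0$ is an isomorphism by the definition of equivariant compactification. The foliation $\cH_Z$ is induced by a codimension one Lie subgroup, that is, on $U_Z$ by a two-dimensional $W_Z\subset\Lie A_Z$. Moreover $\cH_Z\cong\cO_Z^{\dim W_2-1}=\cO_Z^2$ directly from Theorem~\ref{thm:DO22-structure}(2).

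\textbf{Step 3 (klt, canonicity, numerical triviality).} $Z$ is klt by Theorem~\ref{thm:DO22-structure}. Canonicity and $K_{\cH_Z}\equiv0$ follow from quasi-\'etale pullback. Algebraic rank zero follows from invariance under finite covers.

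\textbf{Main obstacle.} The delicate point is checking that the isomorphism $\cH_Z\cong\cO_Z^2$ is compatible with $W_Z$, that is, that it is realized by the infinitesimal $A_Z$-action. This compatibility is what is needed later in Lemma~\ref{lem:rank-two-intrinsic-normalizer}. If the reference does not give it directly, I would reprove it as in Proposition~\ref{prop:rk2-alg1}. Take a basis of $W_Z$. The corresponding global vector fields lie in $H^0(Z,\cH_Z)$ by reflexivity. Their wedge is a section of $\det\cH_Z\cong\cO_Z(-K_{\cH_Z})$, whose zero divisor $D$ is effective and numerically trivial, hence $D=0$. Therefore $W_Z\otimes\cO_Z\to\cH_Z$ is an isomorphism.
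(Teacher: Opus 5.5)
Your proposal is correct and is essentially the paper's proof: the paper applies Theorem~\ref{thm:DO22-structure} directly to $(X,\cF)$, rules out alternative~(1) by algebraic rank zero, and reads off $\dim W_1=0$, $\dim W_2=3$, the group structure, $W_Z$, and $\cH_Z\cong\cO_Z^2$ from alternative~(2), so your Step~1 via Theorem~\ref{thm:druel-product} is harmless but unnecessary. The paper does not address the compatibility of the trivialization with $W_Z$ inside the lemma; it proves it later, on the Galois cover $R$ in the proof of Proposition~\ref{prop:rk2-alg0}, by the same wedge-of-vector-fields argument you sketch.
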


\begin{proof}
Apply Theorem~\ref{thm:DO22-structure} to the codimension one foliation $(X,\cF)$. Alternative~(1) would give a finite quasi-\'etale cover $W'\times C\to X$ whose inverse-image foliation is induced by the projection to $C$; the fibers would yield positive-dimensional algebraic subvarieties tangent to $\cF$ through general points, contradicting algebraic rank zero. Hence alternative~(2) holds.

Thus there are normal projective klt varieties $W_1,W_2$, a finite quasi-\'etale cover
\[
    W_1\times W_2\to X,
\]
and a foliation $\cH_2$ on $W_2$ such that the inverse-image foliation is the pullback of $\cH_2$ by the projection to $W_2$. This inverse-image foliation has algebraic rank zero. Since the algebraic rank of such a product foliation is $\dim W_1+\rk_{\alg}\cH_2$, we get $\dim W_1=0$ and $\rk_{\alg}\cH_2=0$. Hence $\dim W_2=3$.

Set $Z:=W_2$, let $g:Z\to X$ be the induced finite quasi-\'etale morphism, and set $\cH_Z:=\cH_2$. By Theorem~\ref{thm:DO22-structure}(2), the variety $Z$ is an equivariant compactification of a connected commutative algebraic group $A_Z$ of dimension three, with dense open orbit $U_Z$, and $\cH_Z|_{U_Z}$ is induced by a codimension one Lie subgroup. Equivalently, there is a two-dimensional subspace $W_Z\subset\Lie A_Z$ inducing $\cH_Z$ on $U_Z$. The same theorem gives
\[
    \cH_Z\cong\cO_Z^{\dim Z-1}=\cO_Z^2.
\]
The algebraic rank of $\cH_Z$ is zero by the preceding paragraph.
\end{proof}

\begin{proof}[Proof of Proposition~\ref{prop:rk2-alg0}]
By the abundance theorem and Lemma~\ref{lem:R-to-Q-linear}, $K_\cF\sim_{\mathbb Q}0$. Let $r$ be the exact torsion index of $K_\cF$; we prove $r\le30$.

Let $\pi:Y\to X$ be the connected cyclic index one cover of $K_\cF$, set $\cG_Y:=\pi^{-1}\cF$, and let $\eta_Y$ be the tautological generator of $H^0(Y,\cO_Y(K_{\cG_Y}))$. By Lemma~\ref{lem:cyclic}, $\eta_Y$ has primitive deck character of order $r$.

Let $g:Z\to X$, $A_Z$, and $W_Z$ be as in Lemma~\ref{lem:case-e-source}. Let $R$ be the normalization of $X$ in the Galois closure of the compositum of $\mathbb C(Y)$ and $\mathbb C(Z)$ over $\mathbb C(X)$. Denote the induced maps by
\[
    \rho:R\to X,
    \qquad
    p:R\to Y,
    \qquad
    h:R\to Z,
\]
and set $\Gamma:=\Gal(R/X)$. As above, the covers $R\to X$, $p$, and $h$ are finite quasi-\'etale.

Apply Lemma~\ref{lem:finite-refinement-equivariant} to $h:R\to Z$. We obtain a connected commutative algebraic group $A_R$ with dense open orbit $U_R=h^{-1}(U_Z)$, a finite \'etale homomorphism
\[
    \phi_R:A_R\to A_Z,
\]
and an extension of the translation action of $A_R$ to $R$. Set
\[
    W_R:=(d\phi_R)^{-1}(W_Z)\subset\Lie A_R.
\]
Then $W_R$ is two-dimensional, and the inverse-image foliation $\cG_R:=\rho^{-1}\cF$ is induced on $U_R$ by $W_R$.

Choose a basis $w_1,w_2$ of $W_R$, and let $\widetilde w_1,\widetilde w_2$ be the corresponding global vector fields on $R$. As in the proof of Proposition~\ref{prop:rk2-alg1}, reflexivity gives $\widetilde w_i\in H^0(R,\cG_R)$, and the generically nonzero wedge $\widetilde w_1\wedge\widetilde w_2$ has an effective divisor $D$ satisfying
\[
    \cO_R(D)\cong\det\cG_R\cong\cO_R(-K_{\cG_R}).
\]
Since $K_{\cG_R}\equiv0$, the divisor $D$ is numerically trivial, hence zero. Therefore
\[
    W_R\otimes\cO_R\cong\cG_R,
    \qquad
    H^0(R,\cO_R(K_{\cG_R}))\cong\det(W_R^\vee).
\]

The foliation $\cH_Z$ has algebraic rank zero, so Lemma~\ref{lem:no-unipotent-direction} gives $W_Z\cap\Lie A_Z^{\mathrm{unip}}=0$. Lemma~\ref{lem:no-unipotent-after-cover} then gives
\[
    W_R\cap\Lie A_R^{\mathrm{unip}}=0.
\]
Thus $W_R$ injects into the Lie algebra of the semi-abelian quotient $S_R:=A_R/A_R^{\mathrm{unip}}$, whose dimension is at most three. Also, $W_Z$ algebraically generates $A_Z$ by Lemma~\ref{lem:rank-zero-generation}; hence $W_R$ algebraically generates $A_R$ by Lemma~\ref{lem:generation-after-cover}.

Since $\Gamma$ preserves $\cG_R$, Lemma~\ref{lem:rank-two-intrinsic-normalizer} applies. Thus every $\gamma\in\Gamma$ acts on $A_R$ through an affine automorphism whose finite-order linear part preserves $W_R$ and descends to $S_R$. By Lemma~\ref{lem:lattice}, the induced scalar on
\[
    H^0(R,\cO_R(K_{\cG_R}))\cong\det(W_R^\vee)
\]
has order at most $30$.

Choose $\gamma\in\Gamma$ mapping to a generator of $\Gal(Y/X)$. The nonzero pullback $p^{[*]}\eta_Y$ spans $H^0(R,\cO_R(K_{\cG_R}))$, and $\gamma$ acts on it with exact order $r$. Therefore $r\le30$.
\end{proof}

\section{Proof of the main theorems}\label{sec:proof-of-main-theorem}

\begin{proof}[Proof of Theorem~\ref{thm:main-pair}]
By Proposition~\ref{prop:reduction}, it suffices to prove the theorem when $X$ is $\mathbb Q$-factorial klt, $\dim X=3$, $\Ff$ is canonical, $B=0$, and
\[
    2\ge \rk\Ff>\rk_{\alg}\Ff\ge0.
\]
If $\rk\Ff=1$, the required bounded index is Proposition~\ref{prop:rank-one}. If $\rk\Ff=2$ and $\rk_{\alg}\Ff=1$, it is Proposition~\ref{prop:rk2-alg1}. If $\rk\Ff=2$ and $\rk_{\alg}\Ff=0$, it is Proposition~\ref{prop:rk2-alg0}. Thus the reduced case holds, and Proposition~\ref{prop:reduction} gives the theorem.
\end{proof}

\begin{proof}[Proof of Theorem~\ref{thm:main}]
This is the special case of Theorem~\ref{thm:main-pair} with $B=0$ and $\Ii=\{0\}$.
\end{proof}

\end{document}